\theoremstyle{plain}
\newtheorem{theorem}{Theorem}[section]
\newtheorem{lemma}[theorem]{Lemma}
\theoremstyle{remark}
\begin{document}

\title [On weak reducing disks for the unknot in $3$-bridge position]
{On weak reducing disks for the unknot\\ in $3$-bridge position}

\author[B. Kwon]{Bo-hyun Kwon}
\address{Department of Mathematics, Korea University, Seoul, Korea}
\email{bortire74@gmail.com}

\author[J. H. Lee]{Jung Hoon Lee}
\address{Department of Mathematics and Institute of Pure and Applied Mathematics,
Chonbuk National University, Jeonju 54896, Korea}
\email{junghoon@jbnu.ac.kr}

\subjclass[2010]{Primary: 57M25}
\keywords{bridge position, unknot, weak reducing disk}

\begin{abstract}
We show that the complex of weak reducing disks
for the unknot in $3$-bridge position is contractible.
\end{abstract}

\maketitle

\section{Introduction}\label{sec1}

Let $S^3$ be decomposed into two $3$-balls $V$ and $W$ with common boundary sphere $S$.
Let $K$ be an unknot in $n$-bridge position with respect to $S$.
That is,
$V \cap K$ and $W \cap K$ are collections of $n$ boundary parallel arcs in $V$ and $W$ respectively.
We call $(S^3, K) = (V, V \cap K) \cup_S (W, W \cap K)$ a (genus-$0$) $n$-bridge splitting and
$S$ an $n$-bridge sphere.
Each arc of $V \cap K$ and $W \cap K$ is called a bridge.

An $n$-bridge sphere is unique for every $n$ \cite{Otal},
but to understand the mapping class group or deep structure on topological minimality
we need to study the disk complex of the bridge sphere.

The {\em disk complex} $\mathcal{D}$ for $V - K$ is a simplicial complex defined as follows.

\begin{itemize}
\item Vertices of $\mathcal{D}$ are isotopy classes of compressing disks for $S - K$ in $V - K$.
\item A collection of $k+1$ vertices forms a $k$-simplex
if there are representatives for each that are pairwise disjoint.
\end{itemize}

Some important complexes equivalent to subcomplexes of a disk complex
such as sphere complex, primitive disk complex are proved to be useful
to understand the genus two Heegaard splitting of $S^3$ \cite{Scharlemann}, \cite{Cho}.
In this paper, we consider the complex of weak reducing disks for the unknot in $3$-bridge position.
The complex of weak reducing disks is interesting in that
it is equivalent to the complex of cancelling disks (Lemma \ref{lem1}),
which is reminiscent of the primitive disk complex mentioned above.
It is a natural question whether the complex is connected and contractible or not.
Using a criterion in \cite{Cho}, we show that it is contractible.

\begin{theorem}\label{thm1}
The complex of weak reducing disks for the unknot in $3$-bridge position is contractible.
\end{theorem}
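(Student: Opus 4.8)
The plan is to prove the statement for the combinatorially identical complex of cancelling disks, which is legitimate by Lemma~\ref{lem1}; write $\mathcal{C}$ for this complex. To show $\mathcal{C}$ is contractible I would invoke the criterion of \cite{Cho}, which reduces contractibility of a simplicial complex to the existence of a base vertex together with a surgery operation that ``combs'' the complex toward it. Concretely, it suffices to fix one cancelling disk $D_0$ and to exhibit, for every cancelling disk $E$ meeting $D_0$, a cancelling disk $\phi(E)$ that is disjoint from $E$, meets $D_0$ in strictly fewer components than $E$ does, and is compatible with disjointness, in the sense that whenever $\{E_0,\dots,E_k\}$ spans a simplex of $\mathcal{C}$ the disks $\{\phi(E_0),\dots,\phi(E_k)\}$ again span a simplex. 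Given such a $\phi$, iterating it drives every vertex monotonically to a disk disjoint from $D_0$ while preserving simplices; the resulting flow pushes $\mathcal{C}$ into the star of $D_0$, which is a cone and hence contractible.

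The geometric content lies in constructing $\phi$. Fix $D_0$ to be a cancelling disk for one bridge of $V$, so that $\partial D_0 = a_0 \cup \gamma_0$ with $a_0$ a bridge and $\gamma_0$ an arc in $S - K$, and with $D_0 \cap K = a_0$. Given a cancelling disk $E$ with $S$-arc $\gamma_E \subset \partial E$, isotope $E$ so that it meets $D_0$ minimally; then $E \cap D_0$ is a disjoint union of arcs and circles in $S^3 - K$, the endpoints of the arcs lying in $\gamma_0 \cap \gamma_E$ and hence disjoint from the bridges. The key step is a surgery lemma asserting that cutting $E$ along a component of $E \cap D_0$ that is outermost on $D_0$ (or innermost, for a circle) and regluing along the corresponding subdisk $\Delta$ of $D_0$ yields a disk that is again a cancelling disk for the same bridge as $E$ and that meets $D_0$ in fewer components; taking $\phi(E)$ to be the outcome of one such surgery makes $\phi(E)$ disjoint from $E$ by the usual push-off. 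Because the members of a simplex are mutually disjoint, a single outermost subdisk of $D_0$ can be used to surger all of them simultaneously, which will supply the required simplicial compatibility of $\phi$.

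I expect the main obstacle to be the surgery lemma itself, i.e.\ ensuring that the outermost surgery never destroys the cancelling-disk property: a priori it could return an inessential disk, a disk cancelling no bridge, or a compressing disk of the wrong combinatorial type, and these outcomes must be excluded by a careful analysis of the arcs of $\partial E \cap \partial D_0$ on the six-punctured sphere $S - K$. The delicate point is to choose the outermost arc so that the cut-off subdisk $\Delta$ is disjoint from $a_0$; this is exactly where the hypothesis of \emph{three} bridges is used, since with only three bridges on each side the possible patterns of $\gamma_0$ against $\gamma_E$ are limited enough that such a $\Delta$ can always be found, whereas for larger bridge number this control is lost. Finally, the connectivity of $\mathcal{C}$ that the criterion presupposes follows from the same surgery applied to reduce the intersection of any two cancelling disks, so that every vertex is joined to $D_0$ by an edge-path; together with the flow furnished by \cite{Cho} this yields the contractibility asserted in Theorem~\ref{thm1}.
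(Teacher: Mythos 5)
Your high-level framework matches the paper's: both reduce Theorem~\ref{thm1} to Cho's surgery criterion \cite[Theorem 4.2]{Cho}, namely that whenever two disks of the complex intersect minimally, an outermost surgery of one along the other stays in the complex. But the proposal stops exactly where the paper's proof begins. You write that you ``expect the main obstacle to be the surgery lemma itself'' --- that the surgered disk could be inessential, or could fail to be weak reducing/cancelling --- and that these outcomes ``must be excluded by a careful analysis,'' but you never carry out that analysis. That analysis \emph{is} the proof: the paper's entire Section \ref{sec3} is devoted to it. Concretely, the paper first arranges an auxiliary weak reducing disk $D'$ disjoint from $D$ and from the outermost disk $C$ (itself a nontrivial inductive step at the end of Section \ref{sec3}), extends the corresponding cancelling disks to a complete cancelling disk system (Lemma \ref{lem4}), parametrizes the outermost arc by a slope $p/q$ on a $4$-punctured sphere, and then, case by case in the slope, uses free-group word arguments in $\pi_1(W-K)$ (Lemmas \ref{lem5} and \ref{lem6}) to show that either the surgery yields a weak reducing disk or no compressing disk in $W-K$ disjoint from $F$ could exist, contradicting that $F$ is weak reducing. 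None of this machinery, nor any substitute for it, appears in your proposal.

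Moreover, your guess at how the surgery lemma would be proved points in the wrong direction. The delicate issue is not choosing the outermost arc so the cut-off subdisk avoids the bridge $a_0$: the hard part is certifying that the surgered disk admits a \emph{partner} compressing disk on the other side, in $W-K$ --- a condition invisible from the intersection pattern of $\gamma_0$ and $\gamma_E$ on $S-K$ alone, and precisely what the paper's $\pi_1(W-K)$ word analysis controls. (Indeed, the paper shows that in the bad slope ranges the obstruction is that $F$ itself could not have had a disjoint disk in $W-K$, which is a statement about the $W$-side, not the $V$-side.) Two further technical points: performing surgery directly on cancelling (bridge) disks is awkward because bridge disks meet $K$, which is why the paper works with compressing disks in $V-K$ and uses the bijection $d$ and Lemma \ref{lem1} only as a translation device; and your reformulation of Cho's criterion as a base-vertex ``combing'' map $\phi$ compatible with simplices is not the cited statement --- it is closer to a sketch of Cho's own proof of the criterion, and the simultaneous-surgery compatibility you assert for simplices would itself need justification. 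As it stands, the proposal is a plausible outline of the strategy, but the theorem's actual content is missing.
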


In Section \ref{sec2}, we consider the relationship between weak reducing disks and cancelling disks
and some necessary lemmas.
In Section \ref{sec3}, we give a proof of Theorem \ref{thm1}.

\section{Weak reducing disks and cancelling disks}\label{sec2}

Let $K$ be an unknot in $3$-bridge position with respect to $V \cup_S W$.
A properly embedded disk $D$ in $V - K$ is a {\em compressing disk} if
$\partial D$ is an essential simple closed curve in $S - K$.
An embedded disk $\Delta$ in $V$ is a {\em bridge disk} if
$\partial \Delta$ is a union of two arcs $a$ and $b$ with $a \cap b = \partial a = \partial b$, where
$a = \Delta \cap K$ and $b = \Delta \cap S$.
A compressing disk in $W - K$ and a bridge disk in $W$ are defined similarly.

A compressing disk $D$ cuts off a $3$-ball $B$, which contains one bridge, from $V$.
We can take a unique bridge disk $\Delta$ in $B$.
Conversely, for a bridge disk $\Delta$,
the frontier of a neighborhood of $\Delta$ in $V$ is a compressing disk $D$.
So there is a one-to-one correspondence
between the set of compressing disks $\mathcal{C}$ and the set of bridge disks $\mathcal{B}$.
Let $d: \mathcal{C} \to \mathcal{B}$ be the bijection defined by $d(D) = \Delta$.
A similar bijection $\overline{d} : \overline{\mathcal{C}} \to \overline{\mathcal{B}}$ exists between
the set of compressing disks $\overline{\mathcal{C}}$ in $W - K$ and
the set of bridge disks $\overline{\mathcal{B}}$ in $W$.

A compressing disk $D$ is a {\em weak reducing disk} if
there is a compressing disk $E \subset W - K$ such that $\partial D \cap \partial E = \emptyset$.
A bridge disk $\Delta$ is a {\em cancelling disk} if
there is a bridge disk $\overline{\Delta} \subset W$ such that
$\Delta \cap \overline{\Delta} = \{$one point of $K \}$.
Here $(\Delta, \overline{\Delta})$ is called a {\em cancelling pair}.
Figure \ref{fig1} illustrates weak reducing disks and cancelling disks.

\begin{figure}[ht!]
\begin{center}
\includegraphics[width=11cm]{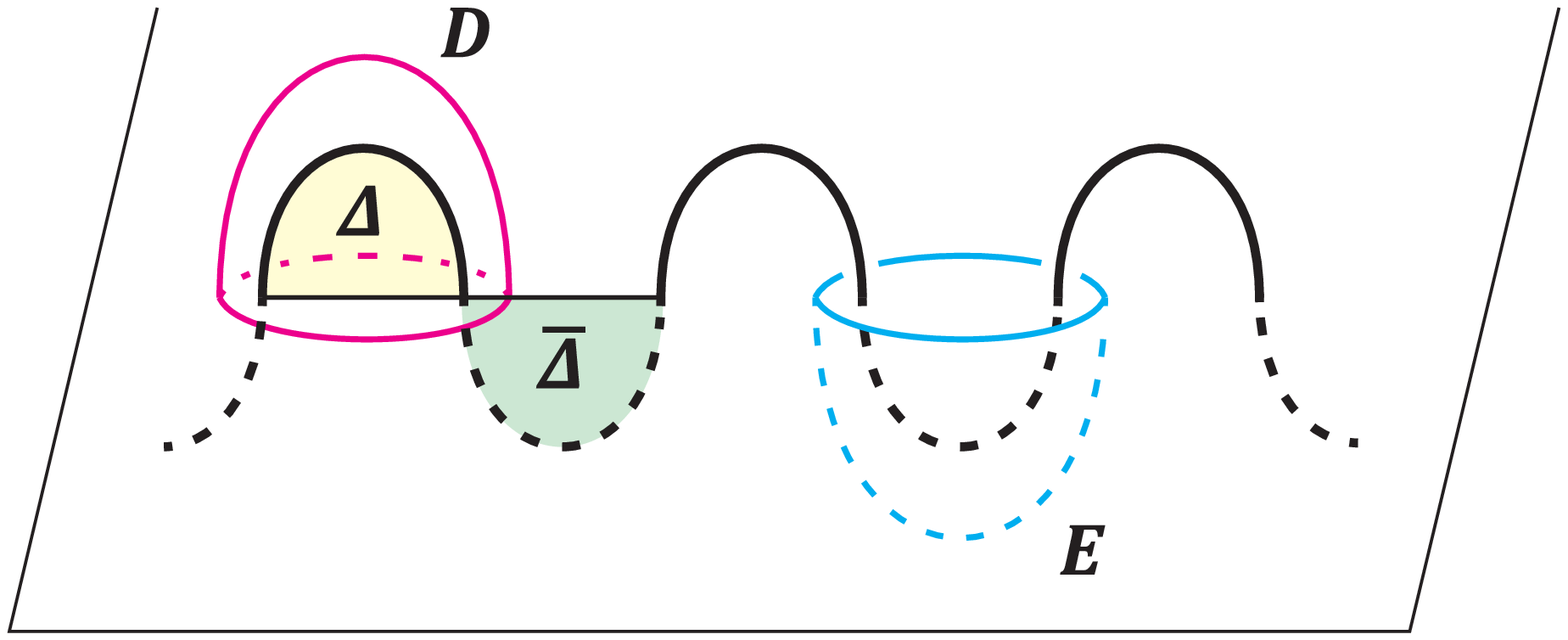}
\caption{Weak reducing disks and cancelling disks}\label{fig1}
\end{center}
\end{figure}

Let $\Delta$ be a bridge disk in, say $V$, with $\Delta \cap K = a$ and $\Delta \cap S = b$.
An isotopy of $a$ to $b$ along $\Delta$ and further, slightly into $W$ is called a {\em reduction}.
See Figure \ref{fig2}.
A reduction along $\Delta$ yields a $2$-bridge position of $K$ if and only if
there is a bridge disk $\overline{\Delta}$ in $W$ such that
$(\Delta, \overline{\Delta})$ is a cancelling pair \cite{Lee}.

\begin{figure}[ht!]
\begin{center}
\includegraphics[width=14.2cm]{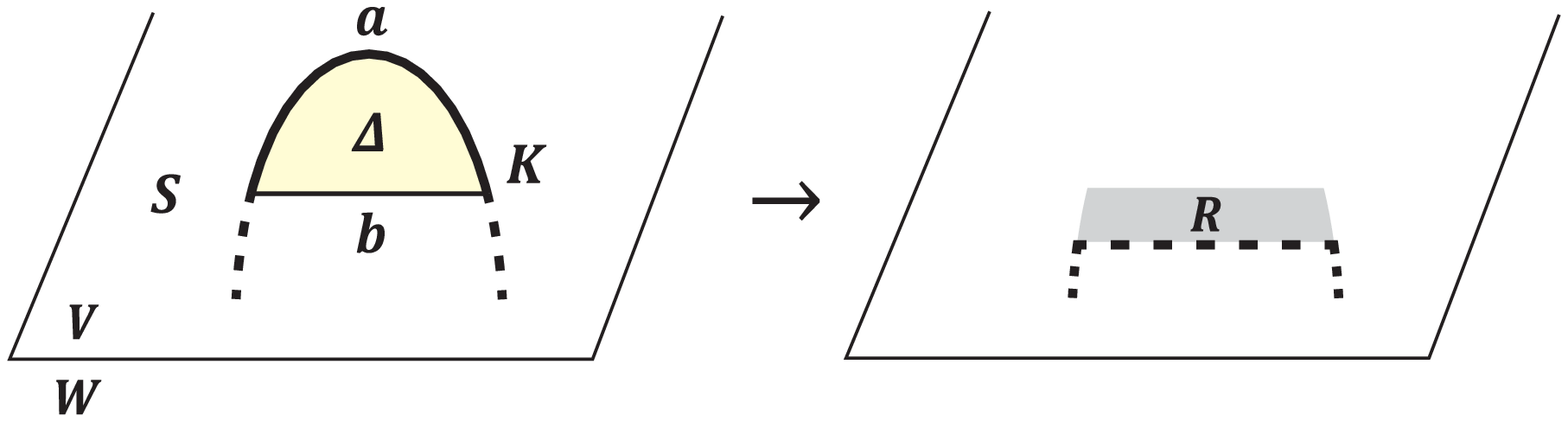}
\caption{A reduction along $\Delta$}\label{fig2}
\end{center}
\end{figure}

\begin{lemma}\label{lem1}
A compressing disk $D$ is a weak reducing disk if and only if $\Delta$ is a cancelling disk.
\end{lemma}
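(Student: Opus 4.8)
The plan is to translate both conditions, via the bijections $d$ and $\overline d$, into statements about how the bridge arcs meet $S$, and then to play them off against the reduction criterion of \cite{Lee} recalled just above. First I would record the local picture. Since $\Delta$ has interior in $\mathrm{int}\,V$ and $\overline\Delta$ has interior in $\mathrm{int}\,W$, every point of $\Delta\cap\overline\Delta$ lies on $S$, so it is controlled by the arcs $b=\Delta\cap S$ and $\overline b=\overline\Delta\cap S$ together with the six points of $K\cap S$. Writing $a=\Delta\cap K$ with endpoints $p_1,p_2$, the disk $D=d^{-1}(\Delta)$ cuts off a ball containing only $a$, so $\partial D$ is isotopic to the boundary of a regular neighborhood of $b$ in $S$ and therefore separates $\{p_1,p_2\}$ from the other four points of $K\cap S$; the same description applies to $E$ and $\overline\Delta_E=\overline d(E)$ on the $W$ side.

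From this one reads off the combinatorial heart of the matter. A cancelling pair forces the two bridges to share exactly one endpoint: an interior crossing of $b$ and $\overline b$ would contribute a point of $\Delta\cap\overline\Delta$ off $K$, so $\Delta\cap\overline\Delta=\{\text{one point of }K\}$ can only be a shared endpoint. On the other hand $\partial D\cap\partial E=\emptyset$ is possible, by the separation property above, only when the two underlying bridges have \emph{no} common endpoint. Thus the two hypotheses of the lemma refer to genuinely different auxiliary bridges, and the real content is the equivalence of two existence statements; this is exactly where the unknottedness of $K$ must be used. My target is therefore the equivalent assertion that $D$ is weak reducing if and only if a reduction along $\Delta$ yields a $2$-bridge position, the latter being identified with $\Delta$ cancelling by \cite{Lee}.

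For the implication that $\Delta$ cancelling forces $D$ weak reducing, I would invoke \cite{Lee}: a cancelling pair lets us reduce $K$ to a $2$-bridge position by an isotopy supported in a neighborhood of $b\subset S$. In the resulting $2$-bridge unknot there is an evident disjoint compressing disk on the $W$ side running over the $W$-bridge untouched by the reduction; tracking this disk backward through the reduction isotopy produces a compressing disk $E\subset W-K$ whose boundary was never pushed across $\partial D$, so $\partial D\cap\partial E=\emptyset$. The only care needed is to keep the reduction disjoint from that $W$-bridge, which is possible since it is supported near $b$.

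For the converse I would run the reduction in reverse. Given a weak reducing partner $E$, the curves $\partial D$ and $\partial E$ bound disjoint subdisks $F_D,F_E\subset S$ carrying the two bridges, so pushing $a$ across $\Delta$ and slightly into $W$ is an isotopy supported near $F_D$ and hence disjoint from $E$; it drops the bridge number by one and leaves $E$ as a compressing disk for the reduced knot. The main obstacle is to certify that the result is honestly a $2$-bridge sphere rather than a perturbed configuration: I expect to argue that this surviving disk $E$, together with a complementary disk on the $V$ side, exhibits the reduced position as the standard $2$-bridge unknot, using the uniqueness of the bridge sphere \cite{Otal}. Once the reduced position is known to be $2$-bridge, \cite{Lee} returns a cancelling pair for $\Delta$, so $\Delta$ is cancelling. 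Throughout, the remaining technical points are routine innermost-disk and outermost-arc isotopies arranging minimal intersection; the genuine difficulty is confined to showing that weak reducibility certifies the reduced splitting is unperturbed.
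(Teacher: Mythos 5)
Your direction ``$\Delta$ cancelling $\Rightarrow$ $D$ weak reducing'' is essentially the paper's argument and is fine: cancel along the pair to reach a $2$-bridge position, take a compressing disk in $W-K$ there, and undo the cancellation by a perturbation small enough to miss that disk. The gap is in the other direction. After the single reduction along $\Delta$, the $W$-side of $K$ consists of two arcs: the untouched bridge cut off by $E$ (the bridge of $\overline{d}(E)$ is necessarily the one sharing no endpoint with $\Delta\cap K$), and the long arc $\tau$ formed from the other two $W$-bridges together with the pushed copy of $\Delta\cap K$. The surviving disk $E$ only certifies that the untouched bridge splits off; it carries no information whatsoever about $\tau$, which is exactly the arc whose boundary-parallelism decides whether the reduced position is $2$-bridge. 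Indeed, disjoint compressing disks on both sides of the reduced sphere exist whether or not $\tau$ is boundary-parallel (take $E$ and a disk cutting off one $V$-bridge), so ``surviving disk plus a complementary $V$-side disk'' cannot distinguish the good case from the bad one. Moreover, Otal's theorem compares two bridge spheres of the unknot, so invoking it presupposes the very fact you are trying to establish, namely that the reduced sphere is a bridge sphere; this is circular. Since non-cancelling bridge disks do exist (this is why weak reducing disks form a proper subcomplex of the disk complex), this step is where the entire content of the lemma sits, and the gap is genuine.

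The idea you are missing is the paper's \emph{simultaneous} reduction: since $D$ and $E$ are disjoint, one can reduce along $\Delta=d(D)$ and $\Delta'=\overline{d}(E)$ at the same time, in disjoint regions. Then $S$ meets $K$ in exactly two points, hence decomposes $K$ as a connected sum $K_1 \# K_2$; because $K$ is the unknot, both factors are unknots (additivity of genus). In particular the $W$-side arc $\tau$ has unknotted closure, hence is boundary-parallel, and from this one sees that the single reduction along $\Delta$ yields a genuine $2$-bridge position; Lee's Theorem~$1.1$ then supplies the cancelling partner $\overline{\Delta}$. It is this passage to a $2$-punctured (connected-sum) sphere, rather than any argument on the $4$-punctured reduced sphere, that certifies the position and that your proposal lacks.
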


\begin{proof}
Suppose that $E \subset W - K$ is a compressing disk disjoint from $D$.
Let $\Delta = d(D)$ and $\Delta' = \overline{d}(E)$.
A simultaneous reduction along $\Delta$ and $\Delta'$ results in a $1$-string decomposition of $K$,
i.e. $K = K_1 \# K_2$ for some knots $K_1$ and $K_2$.
Because $K$ is an unknot, both $K_1$ and $K_2$ are unknots \cite{Rolfsen}.
Thus we can easily see that a reduction along the single $\Delta$ results a $2$-bridge position of $K$.
By \cite[Theorem $1.1$]{Lee}, there exists a bridge disks $\overline{\Delta}$ in $W$ such that
$(\Delta, \overline{\Delta})$ is a cancelling pair.

Conversely, suppose that $(\Delta, \overline{\Delta})$ is a cancelling pair.
A cancellation along $\Delta \cup \overline{\Delta}$ yields a $2$-bridge position of $K$.
Take a compressing disk $E$ in $W - K$ for the $2$-bridge position.
We recover the original $3$-bridge position by giving a perturbation using $\Delta$ and $\overline{\Delta}$:
it is done in such a way that $E$ is disjoint from $\Delta \cup \overline{\Delta}$.
Hence $D = d^{-1}(\Delta)$ is a weak reducing disk.
\end{proof}

\begin{lemma}\label{lem2}
For a weak reducing disk $D \subset V - K$,
there exists a weak reducing disk $D' \subset V - K$ that is disjoint from $D$.
\end{lemma}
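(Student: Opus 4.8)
The plan is to reduce to the $2$-bridge case via Lemma~\ref{lem1} and then exploit the uniqueness of the $2$-bridge sphere of the unknot. Write $t_1, t_2, t_3$ for the three bridges of $V \cap K$, and arrange notation so that $D$ cuts off a $3$-ball $B \subset V$ containing $t_1$; the complementary ball $V^{*} = \overline{V - B}$ then contains $t_2$ and $t_3$. Any bridge disk for $t_2$ taken inside $V^{*}$ induces, via the bijection $d^{-1}$, a compressing disk $D'$ whose support lies in $V^{*}$, so that $\partial D' \cap \partial D = \emptyset$ and $D'$ is automatically disjoint from $D$. Thus the whole problem reduces to producing, among the bridge disks for $t_2$ sitting in $V^{*}$, one that is a cancelling disk; by Lemma~\ref{lem1} this is exactly the condition that the associated $D'$ be a weak reducing disk.

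First I would apply Lemma~\ref{lem1} to $D$ to obtain a cancelling pair $(\Delta, \overline{\Delta})$ with $\Delta = d(D)$, so that a reduction along $\Delta$ carries $K$ to a $2$-bridge position $K'$. Since a reduction is an ambient isotopy, $K'$ is again the unknot, now presented with bridge sphere isotopic to $S$ and with $V$-bridges the images of $t_2$ and $t_3$; crucially the reduction is supported near $t_1$, inside $B$, and leaves $V^{*}$ and the arc $t_2$ untouched. I would then invoke \cite{Otal}: the $2$-bridge sphere of the unknot is unique, so $K'$ is the standard $2$-bridge unknot. In that standard position each $V$-bridge is one half of a cancelling pair, so in particular there is a bridge disk $\Delta_2$ for $t_2$ and a bridge disk $\overline{\Delta}_2 \subset W$ with $(\Delta_2, \overline{\Delta}_2)$ a cancelling pair for $K'$.

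Finally I would transport this back to the original $3$-bridge position. Undoing the reduction amounts to perturbing $K'$ near the old site of $t_1$; exactly as in the second half of the proof of Lemma~\ref{lem1}, this perturbation can be localized so as to be disjoint from $\Delta_2$ and $\overline{\Delta}_2$. Hence $(\Delta_2, \overline{\Delta}_2)$ survives as a cancelling pair for $K$, so $\Delta_2$ is a cancelling disk, and since $\Delta_2$ may be taken inside $V^{*}$ the induced disk $D' = d^{-1}(\Delta_2)$ is disjoint from $D$. Applying Lemma~\ref{lem1} once more shows that $D'$ is the desired weak reducing disk.

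The main obstacle is the claim that the inherited bridge $t_2$ of the standard $2$-bridge unknot is genuinely cancellable, together with the compatibility of the pullback: one must verify that the partner disk $\overline{\Delta}_2$ can be chosen to meet $\Delta_2$ in a single point of $K'$ and that the perturbation reinstating $t_1$ can be kept disjoint from both disks. Concretely this is the realizability question of finding a $W$-compressing disk whose boundary encircles a pair of endpoints disjoint from the feet of $t_2$ and which can be isotoped off $\partial D'$; the uniqueness of the $2$-bridge sphere and the triviality of $K'$ are what rule out the potentially obstructing linked configurations of these boundary curves.
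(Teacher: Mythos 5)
The weak point is your transport-back step, and the appeal to ``exactly as in the second half of the proof of Lemma~\ref{lem1}'' does not carry over. In Lemma~\ref{lem1} the object kept away from the perturbation is a compressing disk $E \subset W - K$, which is disjoint from the knot; your $\overline{\Delta}_2$ is a \emph{bridge} disk, whose boundary runs along a $W$-strand of $K'$. After the reduction along $\Delta$, the knot $K'$ has two $W$-strands: the untouched one adjacent to $t_2$, and the merged strand containing the pushed-in copy of $t_1$ (which is also adjacent to $t_2$). A cancelling partner $\overline{\Delta}_2$ of $\Delta_2$ may therefore legitimately be a bridge disk for the merged strand; in that case $\partial \overline{\Delta}_2$ contains the very arc that the un-reduction pulls back into $V$, and no localization whatsoever can keep the perturbation off $\overline{\Delta}_2$ --- the step fails as stated. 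To repair it you must choose the cancelling pair at the point where $t_2$ meets the untouched $W$-strand, and then you still need to know that a bridge disk for that strand can be chosen disjoint from the rectangular region $R \subset W$ swept by the reduction, while meeting $\Delta_2$ in a single point and with $\Delta_2$ avoiding $B$. That is not a consequence of Lemma~\ref{lem1}; it is precisely the result of \cite{Lee} that the paper invokes in the proof of Lemma~\ref{lem4}, together with the uniqueness statements of Lemma~\ref{lem3}. You flag exactly this as ``the main obstacle'' but leave it unresolved, and it is the actual mathematical content of the argument.

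For contrast, the paper's own proof of Lemma~\ref{lem2} sidesteps all of this. It takes the cancelling pair $(\Delta, \overline{\Delta})$ from Lemma~\ref{lem1}, performs the cancellation along \emph{both} disks, takes $D'$ to be a compressing disk in $V - K$ for the resulting $2$-bridge position (a disk disjoint from the knot, so it can be kept off the perturbation region), and undoes the cancellation keeping $D'$ disjoint from $\Delta \cup \overline{\Delta}$. Then both conclusions fall out of this single disjointness: $D'$ is disjoint from $D$ (a neighborhood frontier of $\Delta$), and $D'$ is weak reducing because it is disjoint from the compressing disk $\overline{d}^{-1}(\overline{\Delta}) \subset W - K$ --- no appeal to Otal's uniqueness, no construction of a new cancelling pair downstairs, and no bridge-disk avoidance result is needed. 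If you wish to keep your route (which is essentially the method the paper uses later to prove Lemma~\ref{lem4}), you should import the strand selection and the $R$-avoidance argument from that proof.
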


\begin{proof}
Let $\Delta = d(D)$ be the cancelling disk corresponding to $D$ and
let $(\Delta, \overline{\Delta})$ be a cancelling pair.
Since $(\Delta, \overline{\Delta})$ is a cancelling pair,
we can take a compressing disk $D' \subset V - K$ disjoint from $\Delta \cup \overline{\Delta}$
as in the proof of Lemma \ref{lem1}.
Then $D'$ is a weak reducing disk because it is disjoint
from the compressing disk $\overline{d}^{-1}(\overline{\Delta})$ in $W - K$, and
$D'$ is disjoint from $D$ also.
\end{proof}

The following fact is well known, so we omit the proof here.

\begin{lemma}\label{lem3}
There is a unique compressing disk (up to isotopy) for a knot in $2$-bridge position.
\end{lemma}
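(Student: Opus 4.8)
The plan is to prove uniqueness for one side, say a compressing disk $D \subset V - K$ (the other side being symmetric), since existence is immediate from the bridge disks described above. I would work in the four-punctured sphere $P = S - K$ and in the manifold $M = V - \mathrm{int}\,N(K)$, which is the complement of a trivial two-string tangle and hence a genus-two handlebody; a compressing disk for $S-K$ in $V-K$ is exactly a meridian disk of $M$ whose boundary is an essential curve in $P$. Every essential simple closed curve in $P$ is separating and divides the four punctures into two pairs, but such curves fall into infinitely many isotopy classes, and the heart of the lemma is that only one of them can bound a disk in $V-K$.

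First I would recall the standard parametrization of essential curves in $P$ by slopes. Passing to the double branched cover $p : T \to V$ branched over the two arcs $V \cap K$, the ball $V$ lifts to a solid torus $T$ and $S$ lifts to a torus $\tilde P = \partial T$ double-covering $P$ with branch points the four punctures. Since an essential $\gamma \subset P$ bounds a twice-punctured disk in $S$, the covering monodromy around $\gamma$ is trivial, so its preimage $p^{-1}(\gamma)$ is a pair of disjoint essential curves on $\tilde P$; assigning to $\gamma$ their common slope realizes the classical bijection between isotopy classes of essential curves in $P$ and slopes on $\tilde P$.

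Next I would identify which slope can bound. Given a compressing disk $D$ with $\partial D = \gamma$, the disk is disjoint from the branch set, so $p^{-1}(D)$ is a pair of disjoint disks in $T$ whose boundaries are the two lifts of $\gamma$. Thus each lift bounds a disk in the solid torus $T$ and is therefore a meridian. As the meridian slope of a solid torus is unique up to isotopy, the slope of $\gamma$ is forced, and by the bijection above $\gamma$ is determined up to isotopy in $P$; concretely it is the separating curve realized by either bridge disk. Finally, knowing the boundary, I would upgrade to uniqueness of the disk: since $M$ is irreducible, any two properly embedded disks with isotopic essential boundaries are isotopic, because after making the boundaries coincide their union is a $2$-sphere bounding a ball in $M$, which guides the isotopy.

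The main obstacle I anticipate is the branched-cover bookkeeping in the middle steps, namely verifying that a compressing disk lifts to honest meridian disks and that equal upstairs slopes force isotopic downstairs curves, i.e.\ injectivity of the slope parametrization. This equivariance is precisely the classical input from rational-tangle theory, which I would either cite or, if a self-contained argument is preferred, replace by a direct innermost-disk and outermost-arc analysis of $D$ against the two bridge disks inside the trivial tangle, reducing $\gamma$ to the separating curve without passing to the cover.
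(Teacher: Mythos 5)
The paper itself offers no proof of this lemma --- it is stated as well known and the proof is explicitly omitted --- so there is nothing internal to compare against; your proposal has to stand on its own, and it does. The double-branched-cover argument you give is the classical and correct route: the trivial two-string tangle $(V, V\cap K)$ lifts to a solid torus $T$, a compressing disk in $V-K$ lifts to a pair of disjoint meridian disks (your Euler-characteristic/monodromy bookkeeping for why the lifted curves are essential in $\partial T$ is right, since each twice-punctured disk side of $\gamma$ lifts to an annulus), uniqueness of the meridian slope pins down the slope of $\partial D$, and the classical bijection between essential curves in the four-punctured sphere and slopes on the torus pins down $\gamma$ itself. You are also right to flag that this bijection (in particular its injectivity) is the one genuinely nontrivial input, to be cited from rational-tangle theory or replaced by a direct outermost-arc argument against the two bridge disks; either choice is standard. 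The final upgrade from ``unique boundary curve'' to ``unique disk'' via irreducibility of the handlebody elides the innermost-circle step needed to make the two disks' interiors disjoint before their union is an embedded sphere, but that is a routine and well-known repair. In short: the paper punts on this lemma, and your proof is a legitimate, essentially complete discharge of the debt.
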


A collection of six cancelling disks $\{\Delta_1, \Delta_2, \ldots, \Delta_6\}$
is called a {\em complete cancelling disk system} if
$(\Delta_i, \Delta_{i+1})$ ($i=1,\ldots,5$) and $(\Delta_6, \Delta_1)$ are cancelling pairs and
$\Delta_i \cap \Delta_j = \emptyset$ for other pairs of $\{ i,j \}$.

We remark that a complete cancelling disk system consisting of four cancelling disks
for an unknot in $2$-bridge position can be defined similarly.

\begin{lemma}\label{lem4}
If $\Delta$ and $\Delta'$ are disjoint cancelling disks in $V$,
then $\{ \Delta, \Delta' \}$ extends to a complete cancelling disk system (Figure \ref{fig3}).
\end{lemma}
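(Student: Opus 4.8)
The plan is to reduce to the $2$-bridge case, where the structure is rigid by Lemma \ref{lem3}, and then reverse the reduction while tracking the cancelling disks. First I set up notation. Since $\Delta$ and $\Delta'$ are disjoint, they contain distinct bridges, so I label the three bridges of $V\cap K$ as $a_1,a_2,a_3$ and the three of $W\cap K$ as $\bar a_1,\bar a_2,\bar a_3$ in such a way that, reading $K$ cyclically, the bridges occur in the order $a_1,\bar a_1,a_2,\bar a_2,a_3,\bar a_3$, consecutive bridges sharing the points $p_2,p_3,\dots,p_6,p_1$ of $K\cap S$ in turn. Any two of the three $V$-bridges have a common adjacent $W$-bridge on $K$, so up to relabeling I may assume $\Delta$ and $\Delta'$ contain $a_1$ and $a_2$, whose common adjacent $W$-bridge is $\bar a_1$. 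The goal is to produce bridge disks $\Delta_1,\dots,\Delta_6$ for $a_1,\bar a_1,a_2,\bar a_2,a_3,\bar a_3$ with $\Delta_1=\Delta$ and $\Delta_3=\Delta'$ forming, in this cyclic order, a complete cancelling disk system.

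Since $\Delta$ is a cancelling disk, by \cite{Lee} a reduction along $\Delta$ yields a $2$-bridge position $K'$ of the unknot; in $K'$ the three consecutive bridges $\bar a_3,a_1,\bar a_1$ merge into a single $W$-bridge $m$ (running from $p_3$ to $p_6$), while $a_2,a_3,\bar a_2$ and the bridge disk $\Delta'$ persist unchanged. By Lemma \ref{lem3} the compressing disk of $K'$ in $V$ is unique up to isotopy, and the $2$-bridge unknot carries a complete cancelling disk system of four disks (the remark following Lemma \ref{lem3}). I choose this system so that its two $V$-disks are $\Delta'$ (for $a_2$) and a disk $\Delta_5$ (for $a_3$) and its two $W$-disks are $\Delta_4$ (for $\bar a_2$) and a disk $F$ (for $m$), with consecutive disks in the $4$-cycle $a_2,\bar a_2,a_3,m$ forming cancelling pairs. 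In particular $F$ meets $\Delta'$ only at $p_3$ and meets $\Delta_5$ only at $p_6$.

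Now I reverse the reduction. The key point is that the bridge disk $F$ for the merged bridge $m$ can be cut by two properly embedded arcs into three bridge disks: $\Delta_6$ (for $\bar a_3$), a middle disk isotopic to $\Delta$ (for $a_1$), and $\Delta_2$ (for $\bar a_1$), meeting along the points $p_1,p_2$. Reversing the reduction is exactly pushing the middle disk back into $V$ so as to recover $a_1$ with bridge disk $\Delta$, while $\Delta_2$ and $\Delta_6$ remain as $W$-bridge disks flanking $a_1$. Setting $\Delta_1=\Delta$ and keeping $\Delta_3=\Delta',\Delta_4,\Delta_5$ from $K'$, the six disks $\Delta_1,\dots,\Delta_6$ are bridge disks, one for each bridge of $K$.

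It remains to check that $\{\Delta_1,\dots,\Delta_6\}$ is a complete cancelling disk system. The cancelling pairs $(\Delta_3,\Delta_4),(\Delta_4,\Delta_5)$ and the mutual disjointness of $\Delta_3,\Delta_4,\Delta_5$ are inherited from the $2$-bridge system; $(\Delta_2,\Delta_3)$ and $(\Delta_5,\Delta_6)$ hold because $\Delta_2,\Delta_6\subset F$ retain the single-point intersections of $F$ with $\Delta'$ at $p_3$ and with $\Delta_5$ at $p_6$; and $(\Delta_1,\Delta_2),(\Delta_6,\Delta_1)$ hold by construction, meeting only at $p_2,p_1$. I expect the main obstacle to be the compatibility and disjointness bookkeeping concentrated in the previous paragraph: one must choose the $W$-disk $F$ of the $2$-bridge system (which, unlike the $V$-disk, is not pinned down by Lemma \ref{lem3}) together with the two cutting arcs so that the middle piece is \emph{the} given disk $\Delta$, and so that the boundary arcs of all six disks on $S$ are pairwise disjoint away from the six prescribed cancellation points. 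Clearing the extra intersections on the bridge sphere by innermost-disk and outermost-arc surgeries, without destroying any required single-point intersection along $K$ or reintroducing crossings among disks that should be disjoint, is the delicate step; the resulting configuration is the one depicted in Figure \ref{fig3}.
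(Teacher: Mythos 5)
Your overall strategy (reduce along $\Delta$ to a $2$-bridge position, invoke the rigidity of Lemma \ref{lem3}, then reverse the reduction) is in the same spirit as the paper's, but your execution has a genuine gap at exactly the step you flag as ``delicate,'' and flagging it is not the same as closing it. The problem is the claim that the $W$-disk $F$ of an abstractly chosen complete cancelling disk system for the $2$-bridge position can be cut by two arcs into three pieces whose middle piece, after reversing the reduction, is \emph{the given disk} $\Delta$. The system $\{\Delta',\Delta_4,\Delta_5,F\}$ comes with no control on how $F$ sits relative to the trace of the reduction (the disk $\Delta$ swept down to $S$ together with the rectangular region $R$ in $W$). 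Uniqueness of $F$ up to isotopy does not help here (and, contrary to your aside, Lemma \ref{lem3} pins down the $W$-side disk just as much as the $V$-side one): an isotopy carrying $F$ to a position standard with respect to $R$ need not preserve the required intersection pattern of $F$ with $\Delta'$, $\Delta_4$, $\Delta_5$, while conversely clearing $F$ off the trace by innermost/outermost surgeries can destroy the single-point intersections along $K$. The stakes are real because bridge disks for a fixed bridge of a $3$-bridge position are \emph{not} unique up to isotopy: if the collar of the merged bridge in $F$ does not return exactly along $R\cup\Delta$, the middle piece comes back as \emph{some} bridge disk for that bridge, not necessarily $\Delta$, and the resulting six disks then fail to extend $\{\Delta,\Delta'\}$. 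So the unresolved compatibility statement is not bookkeeping; it is essentially the whole content of Lemma \ref{lem4}.

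The paper closes this by a two-step argument built on a positioning result you never invoke, namely that by \cite{Lee} a bridge disk for any bridge in $W$ can be chosen to miss the rectangle $R$ created by a reduction. Step one: reduce along $\Delta$; by Lemma \ref{lem3} the surviving $\Delta'$ has a cancelling partner $\overline{\Delta'}$, and by \cite{Lee} this partner is chosen disjoint from $R$, so undoing the reduction produces the three pairwise compatible disks $\Delta,\Delta',\overline{\Delta'}$ in the original $3$-bridge position. Step two: cancel along $(\Delta',\overline{\Delta'})$, take a complete system containing $\Delta$ for the resulting $2$-bridge position, and perturb back. The point of this order of operations is that the disks which must appear in the final system by name ($\Delta$, $\Delta'$, $\overline{\Delta'}$) are all secured and made compatible first, so the second $2$-bridge stage only has to supply the remaining three disks, which may be \emph{any} bridge disks with the correct intersection pattern. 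Your one-step version instead demands that an abstractly chosen disk ($F$) reproduce a prescribed one ($\Delta$) upon reversing the reduction, and that is precisely the step your argument cannot justify.
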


\begin{figure}[ht!]
\begin{center}
\includegraphics[width=10cm]{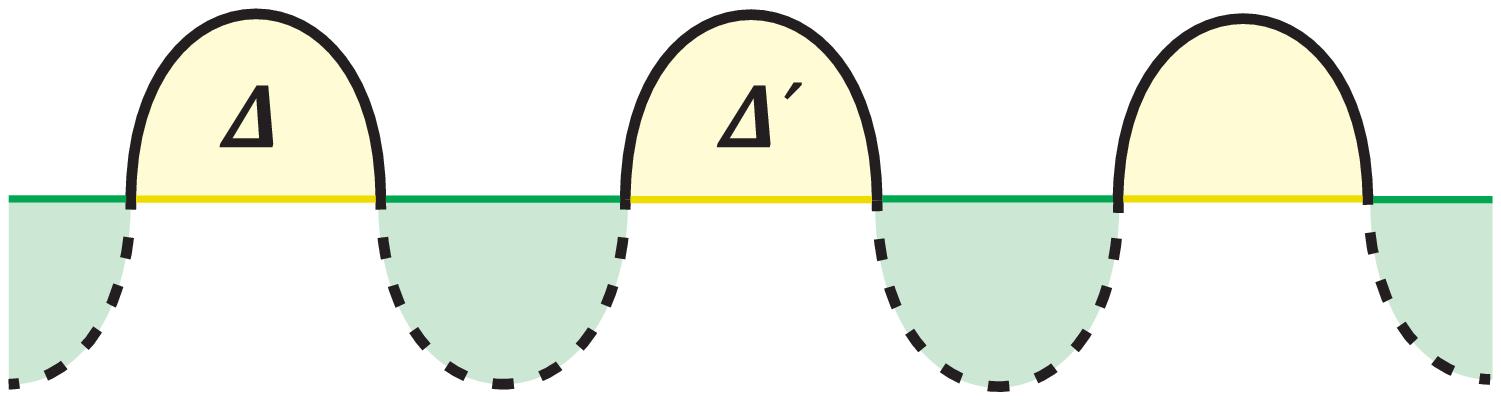}
\caption{A complete cancelling disk system extending $\{ \Delta, \Delta' \}$}\label{fig3}
\end{center}
\end{figure}

\begin{proof}
Since $\Delta$ is a cancelling disk,
a reduction along $\Delta$ yields an unknot in $2$-bridge position.
After the reduction, there exists a cancelling pair $( \Delta', \overline{\Delta'} )$
by uniqueness of the compressing disk and bridge disk
for a knot in $2$-bridge position (Lemma \ref{lem3}).
By \cite{Lee}, for any bridge in $W$ a bridge disk can be chosen so that
it does not intersect the rectangular region $R$ in $W$
(as in Figure \ref{fig2} and Figure \ref{fig4})
that are created after the reduction along $\Delta$.
In particular, $\overline{\Delta'}$ does not intersect $R$.
Then do the reverse operation of the reduction along $\Delta$.
We have disjoint $\Delta$ and $( \Delta', \overline{\Delta'})$.

By cancelling along $( \Delta', \overline{\Delta'} )$ as in Figure \ref{fig4},
we get a $2$-bridge unknot.
Take a complete cancelling disk system containing $\Delta$ for the $2$-bridge position.
Then do the reverse isotopy of the cancellation,
i.e. a perturbation,
to get the cancelling pair $( \Delta', \overline{\Delta'}) $ back additionally.
We obtained a complete cancelling disk system
containing $\{ \Delta, \Delta', \overline{\Delta'} \}$ for the original $3$-bridge position.
\end{proof}

\begin{figure}[ht!]
\begin{center}
\includegraphics[width=9.8cm]{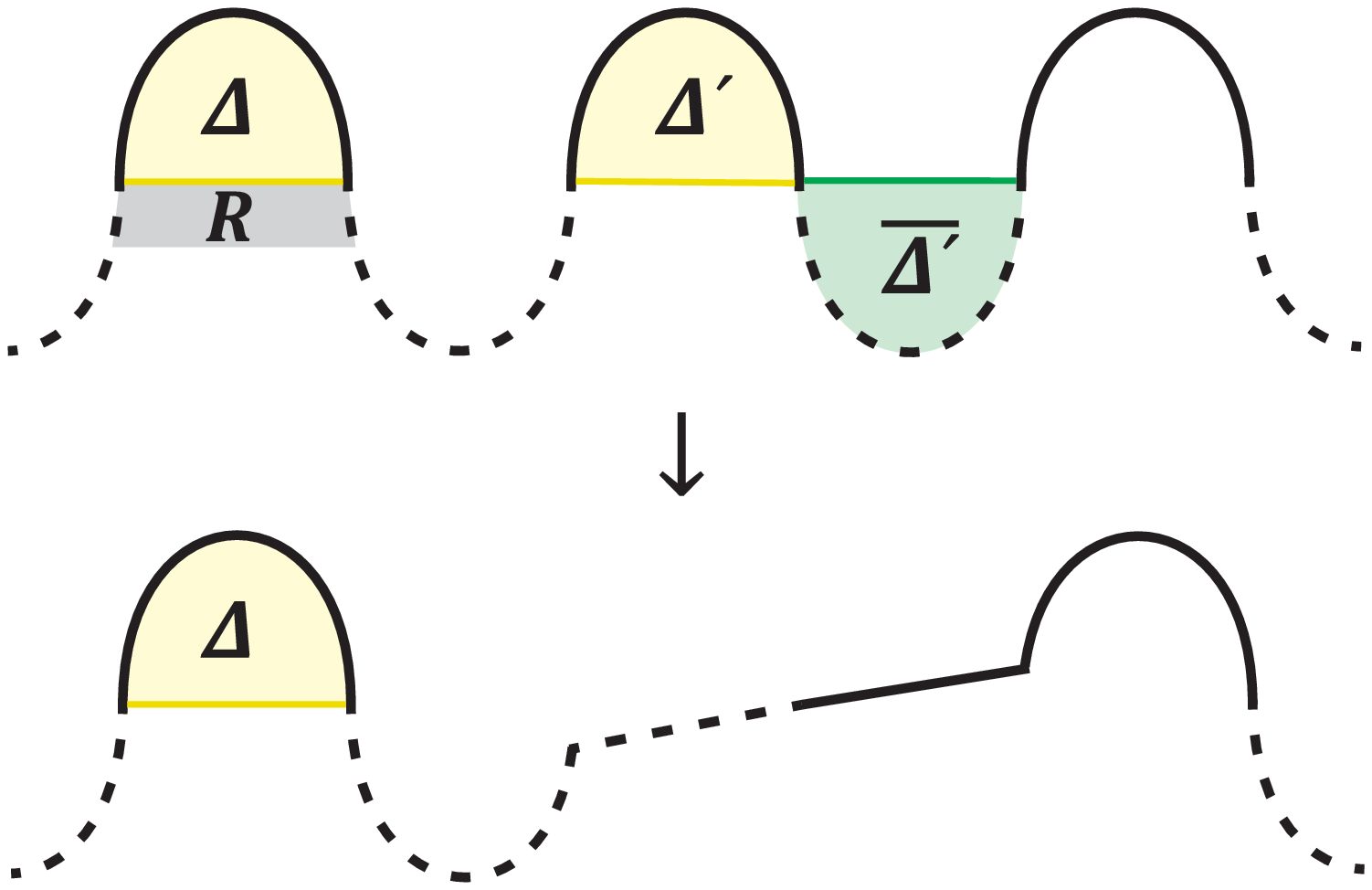}
\caption{A cancellation along $( \Delta', \overline{\Delta'} )$}\label{fig4}
\end{center}
\end{figure}

Let $\Delta_x, \Delta_y, \Delta_z$ be three disjoint bridge disks in $W$.
Let $x = \Delta_x \cap S$, $y = \Delta_y \cap S$, and $z = \Delta_z \cap S$.
By removing a small open neighborhood $\eta (K)$ of $K$ from $W$, we obtain a genus three handlebody.
In order to prove the main theorem in the coming section,
we would like to express a simple closed curve $\gamma$ in $S - K$
as a word in terms of the three generators of $\pi_1 (W - \eta (K))$.
An oriented loop in $S - K$ passing one of the simple arcs $x, y, z$ once
becomes a representative of the corresponding generator of
$\pi_1 (W - \eta (K)) \simeq \pi_1 (W - K)$.
Therefore, the simple closed curve $\gamma$ can be represented by a word $w$ in $x, y, z$.

\begin{lemma}\label{lem5}
If an essential simple closed curve $\gamma$ in $S - K$ bounds a disk in $W - K$,
then the word $w$ of $\gamma$ is reduced to an empty word.
\end{lemma}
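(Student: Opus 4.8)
The plan is to read the lemma as a statement about the fundamental group of the handlebody $W-\eta(K)$. Since $W$ is a $3$-ball and $W\cap K$ consists of three boundary-parallel, hence unknotted and unlinked, arcs, $W-\eta(K)$ is a genus-three handlebody and $\pi_1(W-\eta(K))\simeq\pi_1(W-K)$ is the free group $F(x,y,z)$. By construction the word $w$ is a representative of the class $[\gamma]$ under the inclusion $\pi_1(S-K)\to\pi_1(W-K)$. If $\gamma$ bounds a disk in $W-K$, then $[\gamma]=1$, and in a free group the only words representing the identity are those that freely reduce to the empty word; this gives the conclusion at once. The real work is therefore to verify that the word obtained by reading off the intersections $\gamma\cap(x\cup y\cup z)$ genuinely computes $[\gamma]$ in $F(x,y,z)$.

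To make this dictionary precise I would first exhibit a complete meridian system for $W-\eta(K)$ dual to the generators $x,y,z$. Cutting each bridge disk $\Delta_x,\Delta_y,\Delta_z$ along $\eta(K)$ produces properly embedded disks $D_x',D_y',D_z'$ in the handlebody whose boundaries meet $S$ in exactly the arcs $x,y,z$ and otherwise run over $\partial\eta(K)$. The next step is to check that $\{D_x',D_y',D_z'\}$ cuts the handlebody into a $3$-ball, so that it is a complete meridian system; this follows from the fact that the three bridges are unknotted and unlinked in $W$, so that the complementary handlebody is standard and these are its standard cutting disks. With such a system fixed, the class of any loop in $\pi_1(W-\eta(K))$ is computed by its ordered, signed sequence of intersections with the meridian disks. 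Since $\gamma\subset S-K$ avoids $\partial\eta(K)$, it meets $\partial D_x'$ only along $x$, and similarly for $y$ and $z$; hence the intersection word with the meridian system is exactly the word $w$ of the statement, confirming that $w$ represents $[\gamma]$.

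Granting this, the lemma follows from the free-group fact above. I would present the reduction itself geometrically, which I expect to be the main obstacle: put a disk $D$ bounded by $\gamma$ in general position with respect to $D_x'\cup D_y'\cup D_z'$, discard intersection circles by an innermost-disk argument, and then analyze an outermost arc of intersection on $D$. Such an arc cuts off a subdisk exhibiting two consecutive intersections of $\gamma$ with the \emph{same} arc and with \emph{opposite} orientations, i.e. an adjacent inverse pair $xx^{-1}$ (or $yy^{-1}$, $zz^{-1}$) in $w$; pushing $D$ across this subdisk simultaneously lowers the geometric intersection number and performs one free reduction on $w$. Iterating until $D$ is disjoint from the meridian system leaves $\gamma$ disjoint from $x\cup y\cup z$ and $w$ empty, so $w$ reduces to the empty word. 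The delicate points to get right are the orientation bookkeeping that forces each outermost arc to yield an inverse pair rather than a repeated letter, and the verification that the bridge-disk meridians form a complete system; once these are settled the statement is immediate.
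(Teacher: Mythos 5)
Your proposal is correct and takes essentially the same route as the paper: the paper's entire proof is the observation in your first paragraph, namely that $w$ represents $[\gamma]$ in the free group $\pi_1(W-K)\simeq F(x,y,z)$, so if $\gamma$ bounds a disk in $W-K$ then $[\gamma]=1$ and $w$ must freely reduce to the empty word. The additional work you flag --- verifying via a complete meridian system that the intersection word genuinely computes $[\gamma]$, and the geometric outermost-arc reduction --- is justification of setup that the paper simply takes as given in the paragraph preceding the lemma.
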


\begin{proof}
Suppose that $w$ is not reduced to an empty word.
Then $w$ represents a non-trivial element in the free group $\pi_1(W - K)$.
It contradicts that $\gamma$ bounds a disk in $W - K$.
\end{proof}

\begin{lemma}\label{lem6}
If an essential simple closed curve $\delta$ contains
three subarcs connecting $x$ to $y$, $y$ to $z$, and $z$ to $x$,
then there is no compressing disk in $W - K$ that is disjoint from $\delta$.
\end{lemma}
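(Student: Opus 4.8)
The plan is to argue by contradiction using Lemma \ref{lem5}. Suppose there were a compressing disk $E \subset W - K$ with $\partial E \cap \delta = \emptyset$. Since $\partial E$ and $\delta$ are disjoint simple closed curves on the sphere $S$, the curve $\partial E$ lies in one of the two disks into which $\delta$ cuts $S$; call it $D_1$. First I would isotope $\partial E$, within $D_1 - K$ so that it remains disjoint from $\delta$, into minimal position with respect to the arc system $x \cup y \cup z$, removing only bigons lying in $D_1$. Because $E$ is a compressing disk, $\partial E$ bounds the disk $E$ in $W - K$, so by Lemma \ref{lem5} its word $w(\partial E)$ reduces to the empty word; that is, $\partial E$ represents the trivial element of $\pi_1(W - K)$.

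The next step is the translation between the topology and the free group. In minimal position the word read off from the crossings of $\partial E$ with $x, y, z$ is cyclically reduced, and a cyclically reduced word representing the identity must itself be empty. Hence, in this position, $\partial E$ is disjoint from $x \cup y \cup z$ altogether.

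Finally, I would bring in the hypothesis to rule out essentiality. Let $F$ be the disk bounded by $\partial E$ in the interior of $D_1$, so that $\overline{F} \cap \delta = \emptyset$. Each of the three arcs $x, y, z$ is an endpoint of one of the three prescribed subarcs of $\delta$, hence each arc meets $\delta$ and so contains a point lying outside $\overline{F}$. Since every arc is connected and disjoint from $\partial E = \partial F$, it must lie entirely outside $F$; in particular no endpoint of $K$ lies in $F$. Thus $\partial E$ bounds a disk in $S$ disjoint from $K$, so it is inessential in $S - K$, contradicting that $\partial E$ is the boundary of a compressing disk. Therefore no such $E$ exists.

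The main obstacle I expect is the second step: making precise that minimal position yields a cyclically reduced word, so that triviality in $\pi_1(W - K)$ forces $\partial E$ to be disjoint from the arcs. This is the only place where the geometry must be converted carefully into combinatorial group theory. Once it is in hand, the hypothesis on the three subarcs enters only through the mild fact that each of $x, y, z$ is genuinely crossed by $\delta$, and the final contradiction with essentiality is immediate.
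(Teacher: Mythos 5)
Your argument breaks down at its central step: the claim that once $\partial E$ is in bigon-minimal position with respect to $x \cup y \cup z$, the word read off from its crossings is (cyclically) reduced, so that triviality in $\pi_1(W-K)$ forces $\partial E \cap (x \cup y \cup z) = \emptyset$. That correspondence between minimal position and reduced words is valid when the arc system cuts the surface into disks, but here it fails: $P = S - (x \cup y \cup z)$ is a pair of pants with the punctures on its boundary, and a cancellation such as $zz^{-1}$ corresponds to a subarc of $\partial E$ that leaves and re-enters the same side of $z$. If that subarc is \emph{essential} in $P$ (separating the other two boundary components of $P$ from each other), there is no bigon to remove, yet the two letters still cancel in $\pi_1(W-K)$. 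Such disks genuinely occur. Concretely, let $E'$ be the band sum of the compressing disks dual to $\Delta_x$ and $\Delta_y$, taken along a band in $S-K$ whose core crosses the arc $z$ exactly once. Then $E'$ is a compressing disk in $W-K$; its boundary separates the four endpoints of $x \cup y$ from the two endpoints of $z$, so it is essential; its word is $zz^{-1}$; and $\partial E'$ and $z$ bound no bigon, because the two regions cobounded by a subarc of $\partial E'$ and the segment of $z$ between the two intersection points contain the arc $x$ and the arc $y$ (hence punctures), respectively. So $\partial E'$ cannot be isotoped off the arcs at all, and your intermediate conclusion --- that every compressing disk boundary in $W-K$ becomes disjoint from $x \cup y \cup z$ in minimal position --- is false. (It would imply that $c_x, c_y, c_z$ are the only compressing disk boundaries, which is far from true.)

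The warning sign is in your own closing remark: you use the hypothesis only through the weak consequence that $\delta$ meets each of $x,y,z$, and the statement with that weakened hypothesis is false. With $E'$ as above, draw an essential curve $\delta$ inside the component of $S - \partial E'$ containing $x$ and $y$, crossing $x$, then $z$, then $y$, then $z$ in cyclic order; this $\delta$ meets all three arcs and is disjoint from $\partial E'$, but it has no subarc joining $x$ to $y$ (every passage between $x$ and $y$ in that component crosses $z$). So any correct proof must exploit the presence of \emph{all three} types of connecting subarcs, which is exactly what the paper's argument does: it intersects $E$ with the bridge disks $\Delta_x \cup \Delta_y \cup \Delta_z$, and an outermost-arc argument yields either an essential simple closed curve of $P$ (when $E$ misses the bridge disks, $\partial E$ itself, which is then boundary-parallel in the pair of pants) or an essential arc of $P$ contained in $\partial E$ with both endpoints on one component of $\partial P$. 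Either object splits the three boundary components of $P$ into two nonempty groups, and whichever subarc of $\delta$ joins arcs in different groups must cross $\partial E$, a contradiction. Your word-theoretic framework via Lemma \ref{lem5} is the one the paper deploys only later, in Section \ref{sec3}, and there the possible cancellations are tracked geometrically case by case precisely because they cannot be assumed away by minimal position.
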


\begin{figure}[ht!]
\begin{center}
\includegraphics[width=6cm]{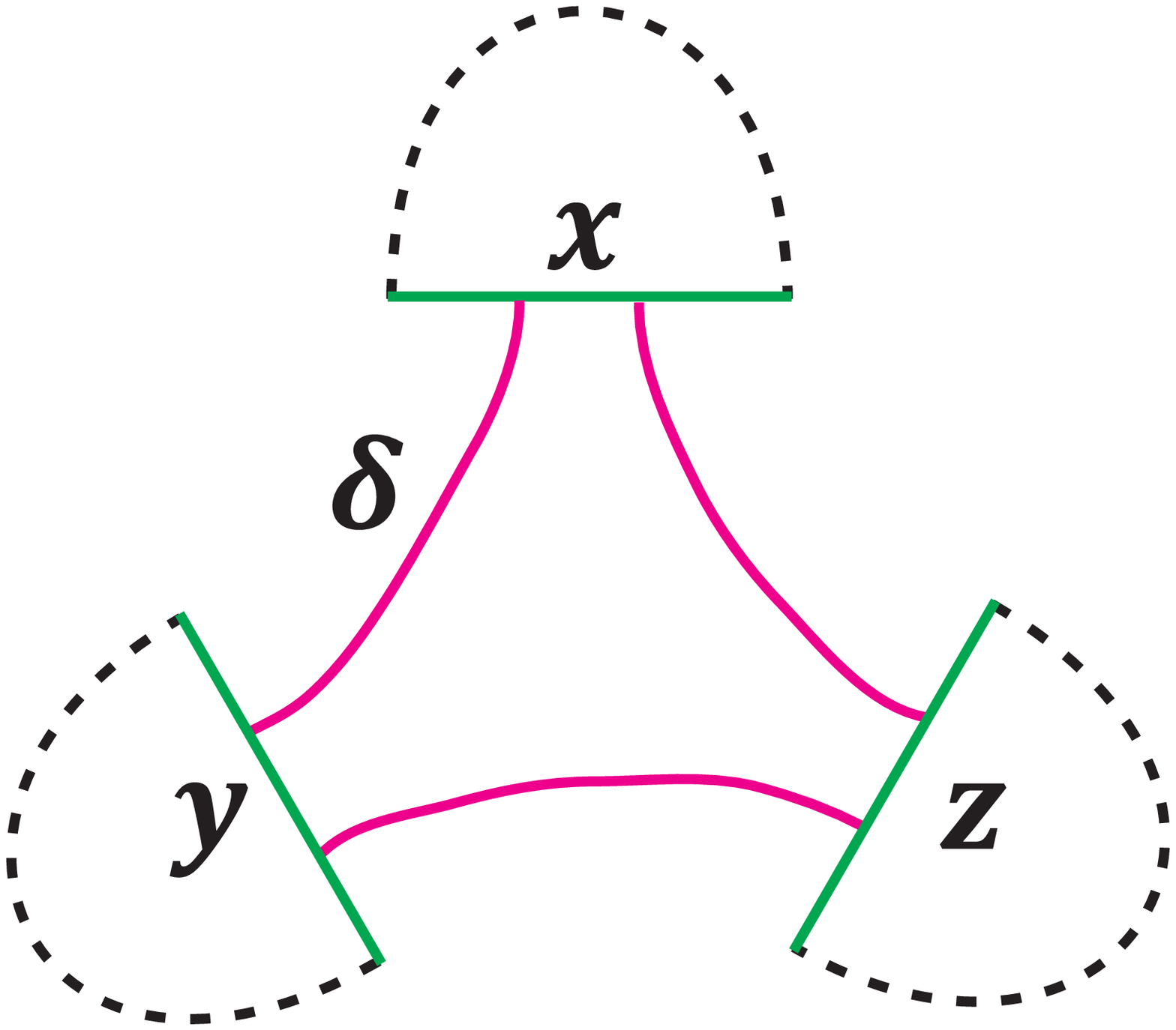}
\caption{An obstruction to a weak reducing disk in $W - K$}\label{fig5}
\end{center}
\end{figure}

\begin{proof}
Let $E$ be a compressing disk in $W - K$.
If $E$ is disjoint from $\Delta_x \cup \Delta_y \cup \Delta_z$,
then $\partial E$ is a simple closed curve that does not bound a disk in $P = S - (x \cup y \cup z)$,
which is homeomorphic to a $3$-punctured sphere.
Then the three subarcs of $\delta$ is an obstruction for $\partial E$ to be disjoint from $\delta$.
See Figure \ref{fig5}.
Suppose that $E$ intersects $\Delta_x \cup \Delta_y \cup \Delta_z$.
We may assume that $E \cap (\Delta_x \cup \Delta_y \cup \Delta_z)$ consists of arc components.
Consider an outermost disk $C$ in $E$
cut off by an outermost arc of $E \cap (\Delta_x \cup \Delta_y \cup \Delta_z)$.
Then $C \cap P$ is an essential arc in $P$ such that
the two endpoints of $C \cap P$ are on the same component of $\partial P$.
Again, the three subarcs of $\delta$ is an obstruction to $\partial E$.
\end{proof}

\section{Proof of Theorem \ref{thm1}}\label{sec3}

\subsection{Setting}

Let $D$ and $F$ be weak reducing disks in $V - K$ such that $D \cap F \ne \emptyset$.
We assume that $|D \cap F|$ is minimal up to isotopy.
By \cite[Theorem $4.2$]{Cho}, it is enough to show that
one of the disks obtained by surgery of $D$ along an outermost disk in $F$
cut off by an outermost arc of $D \cap F$ is a weak reducing disk.
Let $\Delta = d(D)$.
By an isotopy of $D$, we assume that
$\partial D$ equals the boundary of a small neighborhood of $\Delta \cap S$ in $S$.
Let $\alpha_o$ be an outermost arc of $D \cap F$ in $F$ and
let $C$ be the corresponding outermost disk in $F$ cut off by $\alpha_o$.
Let $\alpha = C \cap S$.

First we consider the special case that
there exists another weak reducing disk $D'$ in $V - K$ disjoint from $D$ and $C$.
Let $\Delta' = d(D')$.
We isotope $D'$ so that $\partial D'$ equals
the boundary of a small neighborhood of $\Delta' \cap S$ in $S$.
Let $P$ and $P'$ be the disks that $\partial D$ and $\partial D'$ bound,
containing $\Delta \cap S$ and $\Delta' \cap S$ respectively.
The arc $\alpha$ cuts off an annulus $A$ from $S -(P \cup P')$.
Let $\beta$ be an essential arc of $A$ disjoint from $\alpha$.
We give $\beta$ an orientation from $\partial D'$ to $\partial D$.
We isotope $\alpha$ so that
$\alpha$ equals the frontier of a small neighborhood of $\beta \cup P'$ in ${\mathrm{cl}}(S - P)$.
See Figure \ref{fig6}.

\begin{figure}[ht!]
\begin{center}
\includegraphics[width=5.5cm]{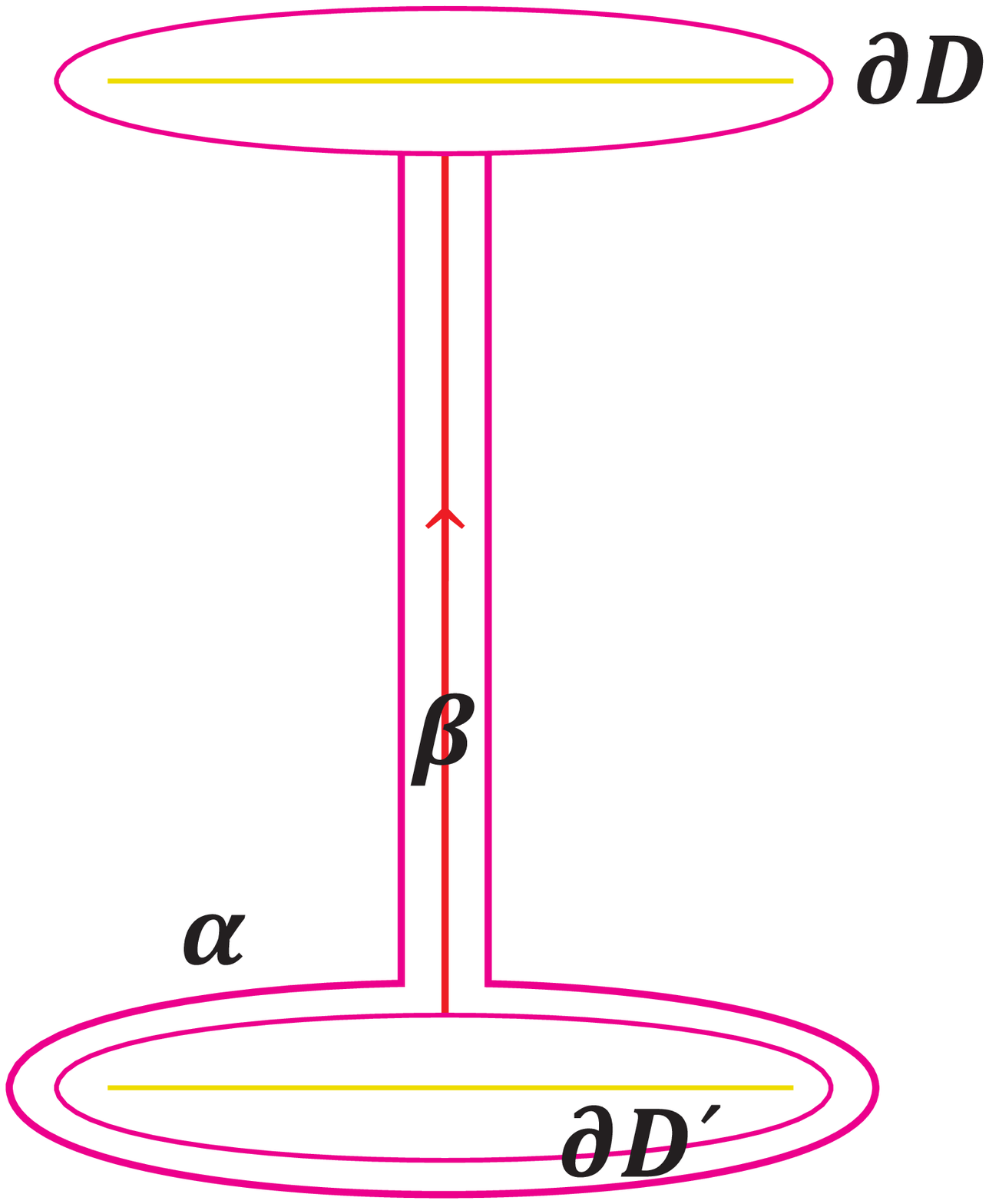}
\caption{The arcs $\alpha$ and $\beta$}\label{fig6}
\end{center}
\end{figure}

Using Lemma \ref{lem4}, extend $\{ \Delta, \Delta' \}$ to a complete cancelling disk system
$\{ \Delta, \Delta' ,\Delta_b, \Delta_x, \Delta_y, \Delta_z \}$, where
$\Delta, \Delta', \Delta_b$ are in $V$ and $\Delta_x, \Delta_y, \Delta_z$ are in $W$.
Let $b = \Delta_b \cap S$, $x = \Delta_x \cap S$, $y= \Delta_y \cap S$, $z = \Delta_z \cap S$.
We assume that the arcs $x,y,z,b$ satisfy the following (Figure \ref{fig7}).

\begin{itemize}
\item $x$ intersects only a point of $\partial D$.
\item $y$ intersects only a point of $\partial D'$.
\item $z$ intersects a point of $\partial D$ and a point of $\partial D'$.
\item $b$ intersects none of $\partial D$ and $\partial D'$.
\end{itemize}

\begin{figure}[ht!]
\begin{center}
\includegraphics[width=6cm]{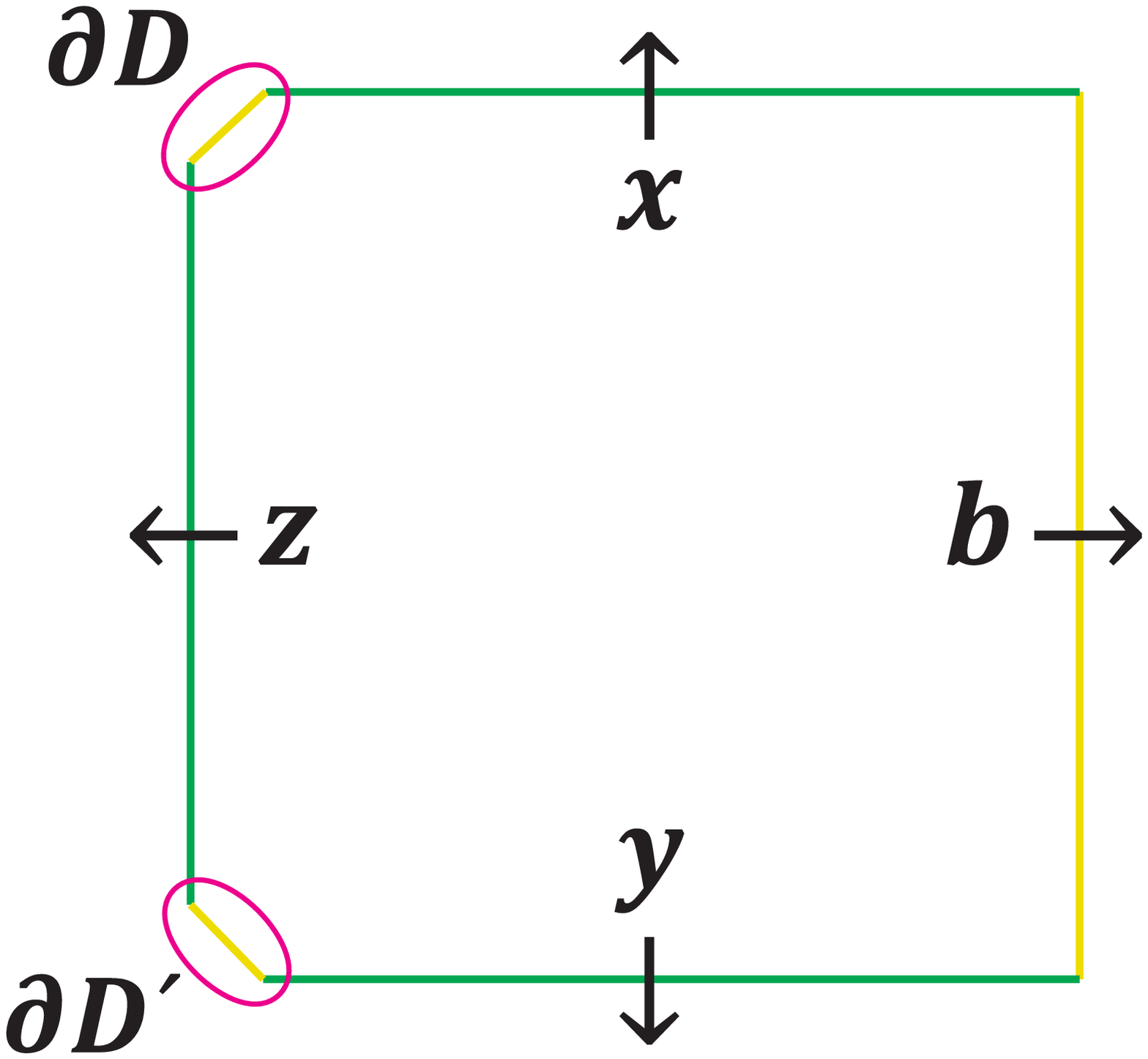}
\caption{The arcs $x, y, z$, and $b$}\label{fig7}
\end{center}
\end{figure}

We shrink each of $P$ and $P'$ to a point.
Then $\beta$ can be regarded as a properly embedded arc in a $4$-punctured sphere.
An isotopy class of a  properly embedded arc with different endpoints in a $4$-punctured sphere
is completely determined by its slope.
A slope $s \in \mathbb{Q} \cup \{ \infty \}$ is expressed
as a fraction $\frac{p}{q}$ of relatively prime integers $p$ and $q$.
The oriented arc $\beta$ starts at the lower left vertex and ends at the upper left vertex.
Note that for the slope $s = \frac{p}{q}$ of $\beta$, $p$ is odd and $q$ is even.
See Figure \ref{fig8} for an example of $s = \frac{3}{8}$.
Without loss of generality, we may assume that $s$ is positive.

\begin{figure}[htb]
\begin{center}
\includegraphics[width=8.3cm]{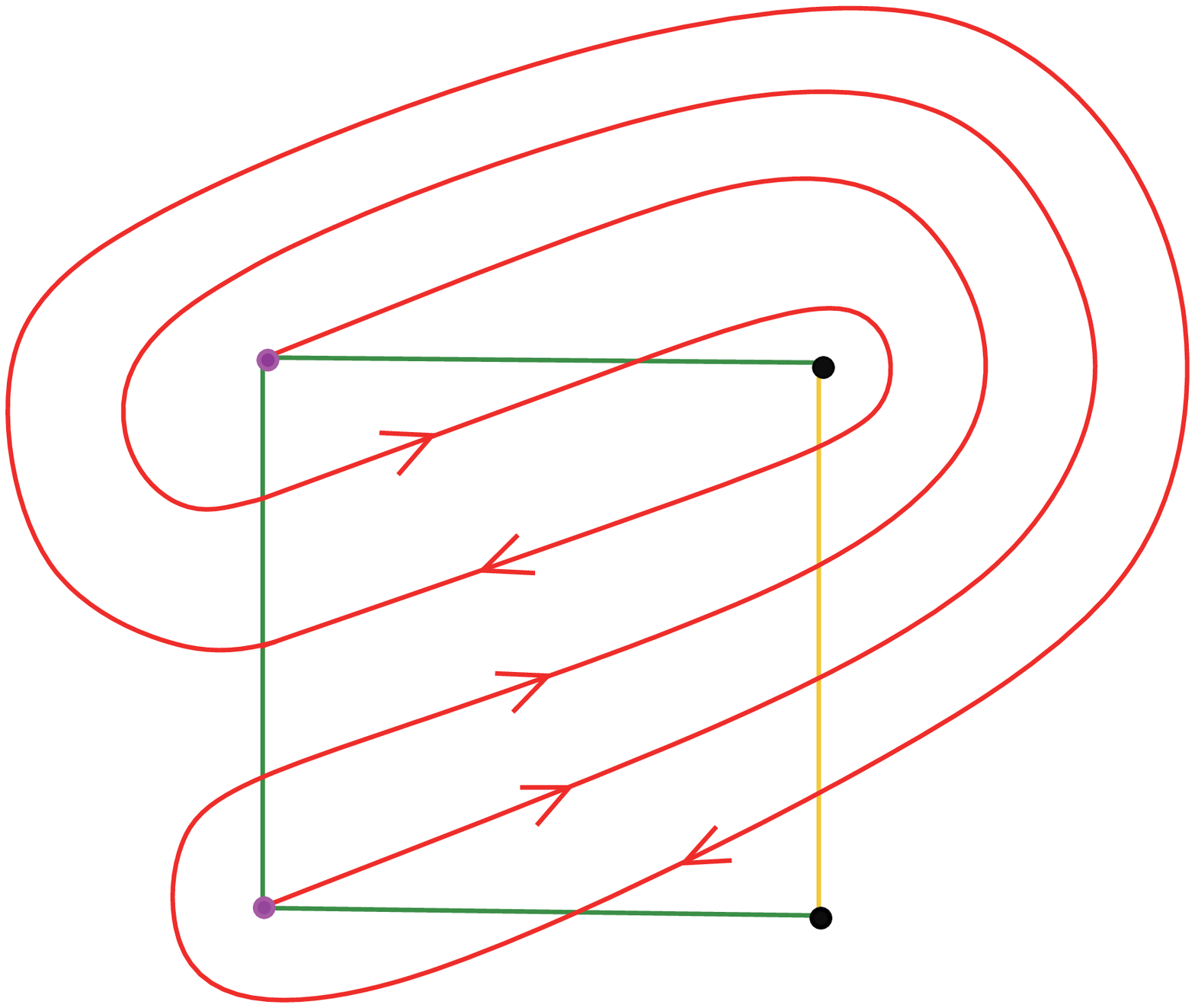}
\caption{An arc of slope $\frac{3}{8}$}\label{fig8}
\end{center}
\end{figure}

Each time $\beta$ passes through an arc, it is given a generator
among $x^{\pm 1}, y^{\pm 1}, z^{\pm 1}, b^{\pm 1}$.
So the arc $\beta$ can be written as a reduced word $w$ in $x,y,z,b$.
We divide cases according to the slope $s = \frac{p}{q}$ and investigate the word $w$.
We show that either a surgery of $D$ along the outermost disk $C$ yields a weak reducing disk,
or actually $F$ is not a weak reducing disk.

When $s = \infty (= \frac{1}{0})$, $\beta$ is equal to $z$.
In this case, the disk obtained by a surgery of $D$ along $C$ is a weak reducing disk.
If $s > 1$, then $\alpha$ contains all three types of subarcs
connecting $x$ to $y$, $y$ to $z$, and $z$ to $x$.
So a compressing disk in $W - K$ disjoint from $F$ cannot exist by Lemma \ref{lem6}.
See Figure \ref{fig9}.
The slope $s$ cannot be $1(= \frac{1}{1})$.
Hence we assume that $0 < s < 1$.

\begin{figure}[ht!]
\begin{center}
\includegraphics[width=6cm]{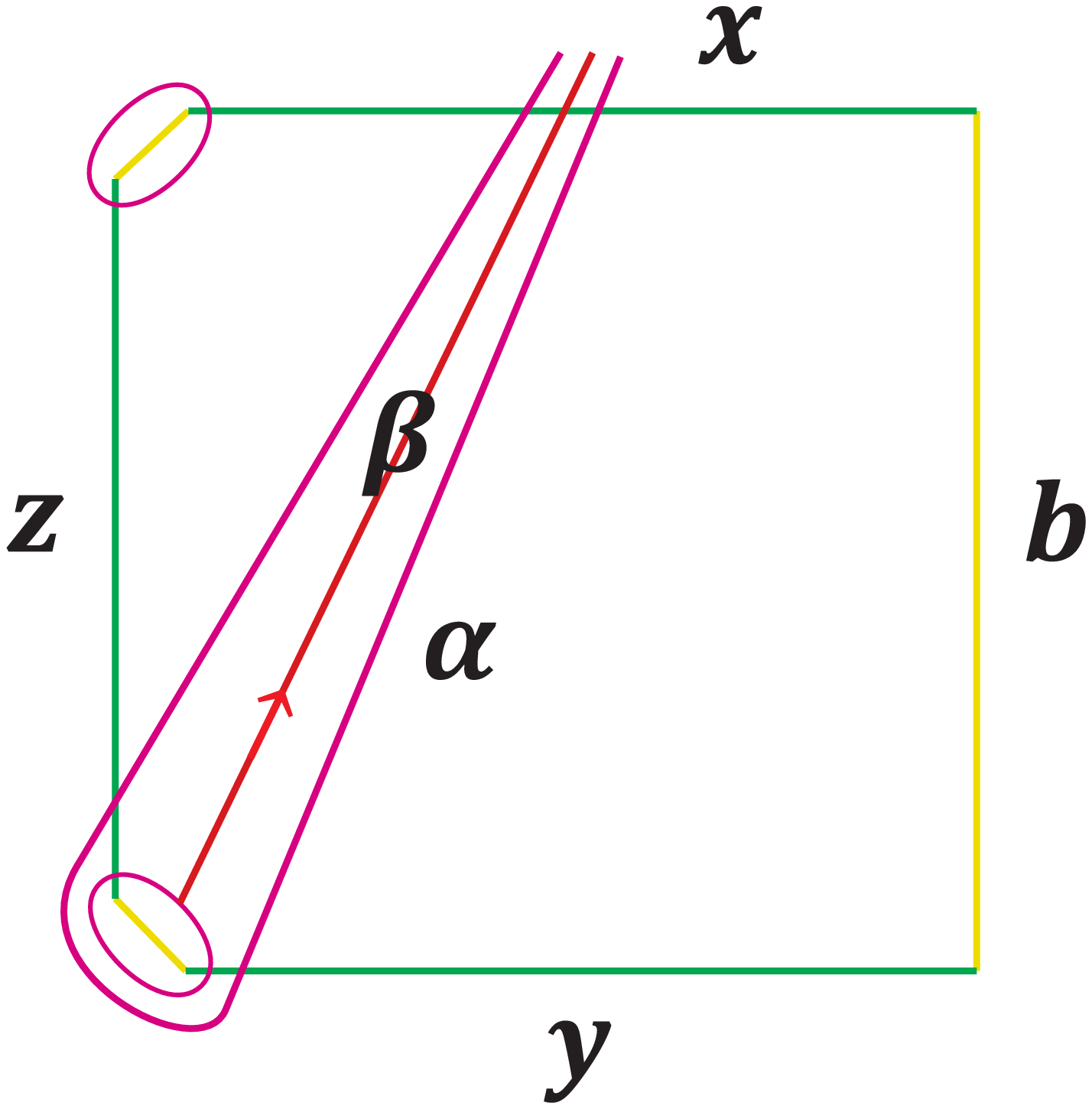}
\caption{The slope $s > 1$}\label{fig9}
\end{center}
\end{figure}

\begin{figure}[htb]
\begin{center}
\includegraphics[width=8.5cm]{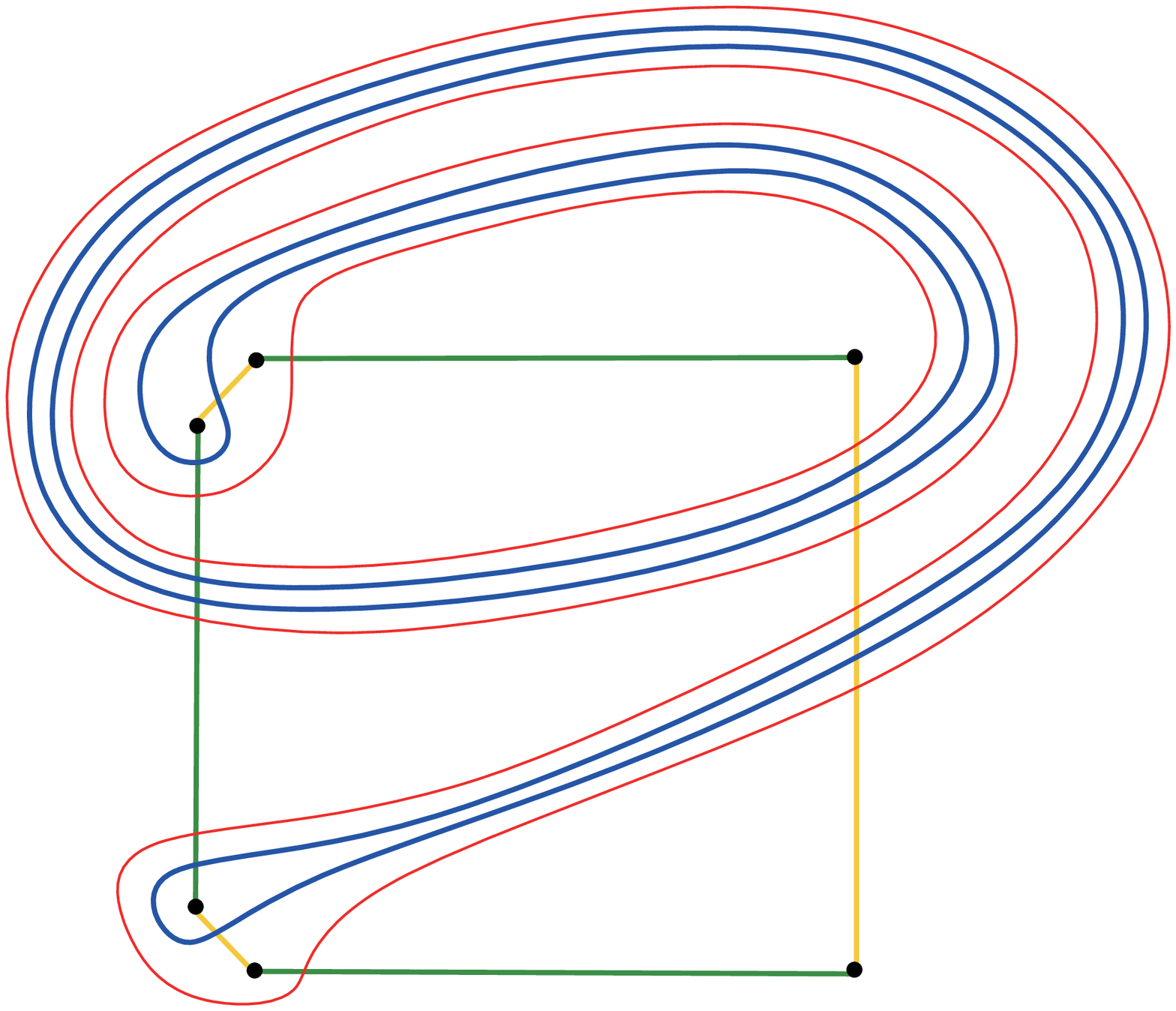}
\caption{A weak reducing pair after the surgery}\label{fig10}
\end{center}
\end{figure}

If $\frac{1}{2} < s < 1$,
then $w$ begins with $bx^{-1}$, so $\alpha$ contains all three types of subarcs as above.
Suppose that $s = \frac{1}{2k}$ for some natural number $k$.
Then we can see that a surgery of $D$ along $C$ yields a weak reducing disk.
See Figure \ref{fig10} for an example of $k = 2$.
Thus from now on, for $s = \frac{p}{q}$ with $p$ odd and $q$ even,
we assume that $\frac{2p}{q} < 1$ and $p > 1$.
For such a slope $s$, there exists a natural number $k$ satisfying one of the following.

Case (a). $\frac{2kp}{q} < 1 < \frac{(2k+1)p}{q}$

Case (b). $\frac{(2k+1)p}{q} < 1 < \frac{(2k+2)p}{q}$

For each case, there are two subcases according to when a multiple of $\frac{p}{q}$ exceeds $2$.
We list the beginning subword of $w$ in each subcase together.

Subcase $1$. $\frac{4kp}{q} < 2 < \frac{(4k+1)p}{q}$,
$w$ begins with $w_1 = (bz^{-1})^k x (b^{-1}z)^k y^{-1}b$.

Subcase $2$. $\frac{(4k+1)p}{q} < 2 < \frac{(4k+2)p}{q}$,
$w$ begins with $w_2 = (bz^{-1})^k x (b^{-1}z)^k b^{-1}y$.

Subcase $3$. $\frac{(4k+2)p}{q} < 2 < \frac{(4k+3)p}{q}$,
$w$ begins with $w_3 = (bz^{-1})^k b x^{-1} z (b^{-1}z)^k y^{-1}b$.

Subcase $4$. $\frac{(4k+3)p}{q} < 2 < \frac{(4k+4)p}{q}$,
$w$ begins with $w_4 = (bz^{-1})^k b x^{-1} z (b^{-1}z)^k b^{-1}y$.

In each subcase, let $p_i$ be the intersection point of $\beta \cap (x \cup y \cup z \cup b)$
corresponding to the last generator of $w_i$.
Let $s_i$ be the short arc of $(x \cup y \cup z \cup b) - \alpha$ containing $p_i$.
Let $R_i$ be the disk region of $S - (\alpha \cup s_i)$ which contains $P'$.

Let $G$ be a compressing disk in $W - K$ disjoint from $F$.
Let $\overline{w}$ be the word of $\partial G$ in $x,y,z$.
By lemma \ref{lem5}, $\overline{w}$ is freely reduced to the empty word.
In any case, since $\alpha$ contains subarcs connecting $x$ to $z$ and $y$ to $z$,
a cancellation of two adjacent generators of $\overline{w}$ is $zz^{-1}$ or $z^{-1}z$.
The cancellation and the subsequent cancellations take place in one of the following three ways.

\begin{itemize}
\item[(i)] in the interior of $R_i$.
\item[(ii)] when $\partial G $ passes through $s_i$.
\item[(iii)] in the complementary region of $R_i$.
\end{itemize}
However, the arc $\alpha$ is an obstruction to $\partial G$, and
we will show that actually $G$ cannot exist in any case.

In the interior of $R_i$, only a $k$-times nested cancellation of $zz^{-1}$ occurs in $\overline{w}$.
See Figure \ref{fig11} for an example of $k=2$.
The left of Figure \ref{fig11} is Case (a) and
the right is Case (b).
Subwords of $\overline{w}$ in the left and right are
$z(z(zz^{-1})z^{-1})x$ and $z(z(zz^{-1})z^{-1})x^{-1}$ respectively.
The thick solid blue curve represents the first cancellation $zz^{-1}$ and
the two thin solid blue curves represent the subsequent cancellation $z(zz^{-1})z^{-1}$.
The dotted curve represents that a cancellation does not occur any more.

\begin{figure}[htb]
\begin{center}
\includegraphics[width=14.5cm]{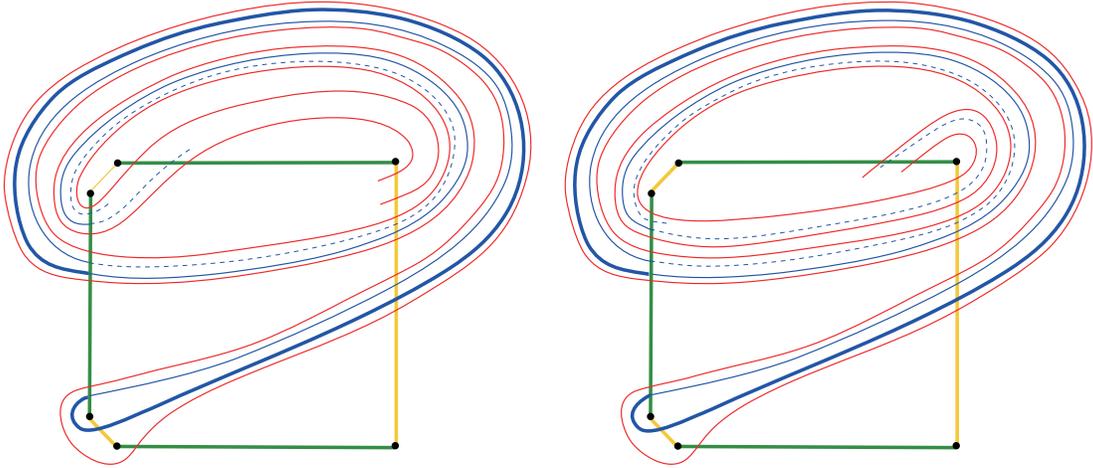}
\caption{A cancellation in the interior of $R_i$}\label{fig11}
\end{center}
\end{figure}

When $\partial G$ passes through $s_i$, a cancellation does not occur as can be seen in Figure \ref{fig12}.
The left of Figure \ref{fig12} is when $w_i$ ends with $y^{-1}b$ (Subcases $1$ and $3$) and
the right is when $w_i$ ends with $b^{-1}y$ (Subcases $2$ and $4$).
Subwords of $\overline{w}$ depicted in Figure \ref{fig12} are $zy^{-1}z^{-1}$ and $zyz^{-1}$,
hence a cancellation does not occur.

\begin{figure}[ht!]
\begin{center}
\includegraphics[width=14.5cm]{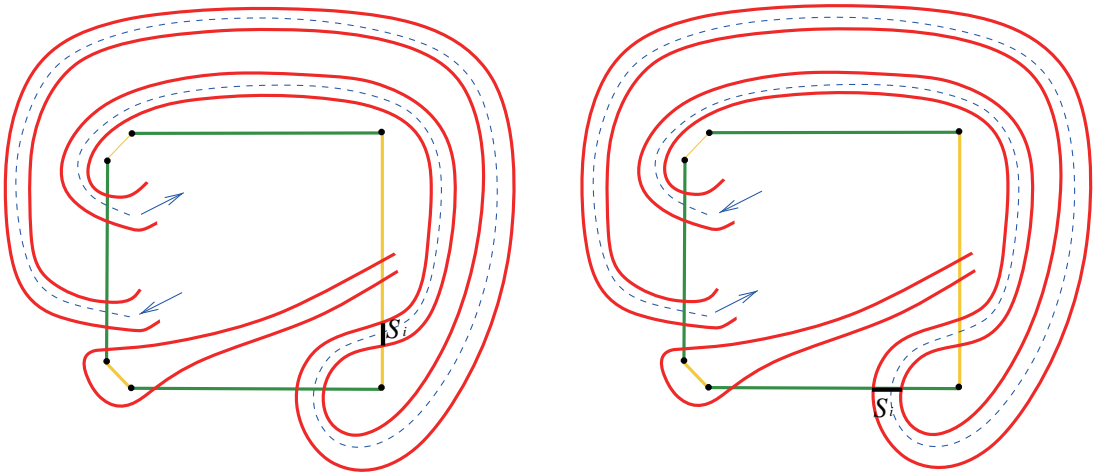}
\caption{No cancellation when $\partial G$ passes through $s_i$}\label{fig12}
\end{center}
\end{figure}

Now consider a cancellation in the complementary region of $R_i$.
See Figure \ref{fig13} -- \ref{fig16} for an example of $k=2$.
(For simplicity, we draw $\beta$ instead of $\alpha$.)
In Subcase $1$, the subword of $\overline{w}$ is $z^{-1}(z^{-1}(z^{-1}z)z)y^{-1}$.
So only a $2$-times nested cancellation of $z^{-1}z$ occurs.
The subwords of $\overline{w}$ in Subcases $2$, $3$, $4$ are
$z^{-1}(z^{-1}(z^{-1}z)z)y^{\pm 1}$,
$z^{-1}(z^{-1}(z^{-1}z)z)y^{\pm 1}$,
$z^{-1}(z^{-1}(z^{-1}z)z)y$, respectively.
In general, only a $k$-times nested cancellation of $z^{-1}z$ occurs in $\overline{w}$.
So we conclude that $G$ cannot exist.

\begin{figure}[htb]
\begin{center}
\includegraphics[width=8.7cm]{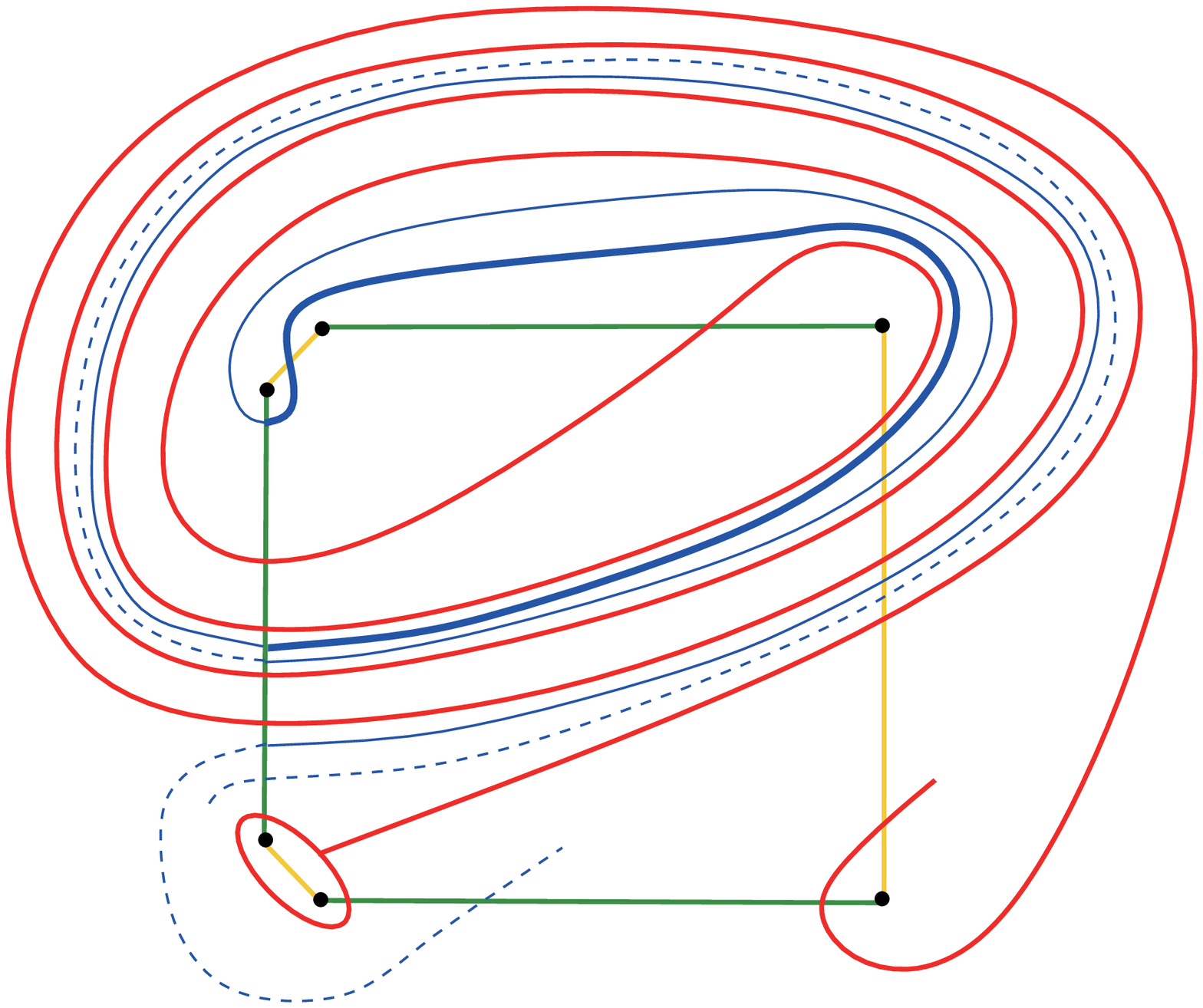}
\caption{A cancellation in the complementary region of $R_i$--- Subcase $1$}\label{fig13}
\end{center}

\end{figure}
\begin{figure}[ht!]
\begin{center}
\includegraphics[width=8.7cm]{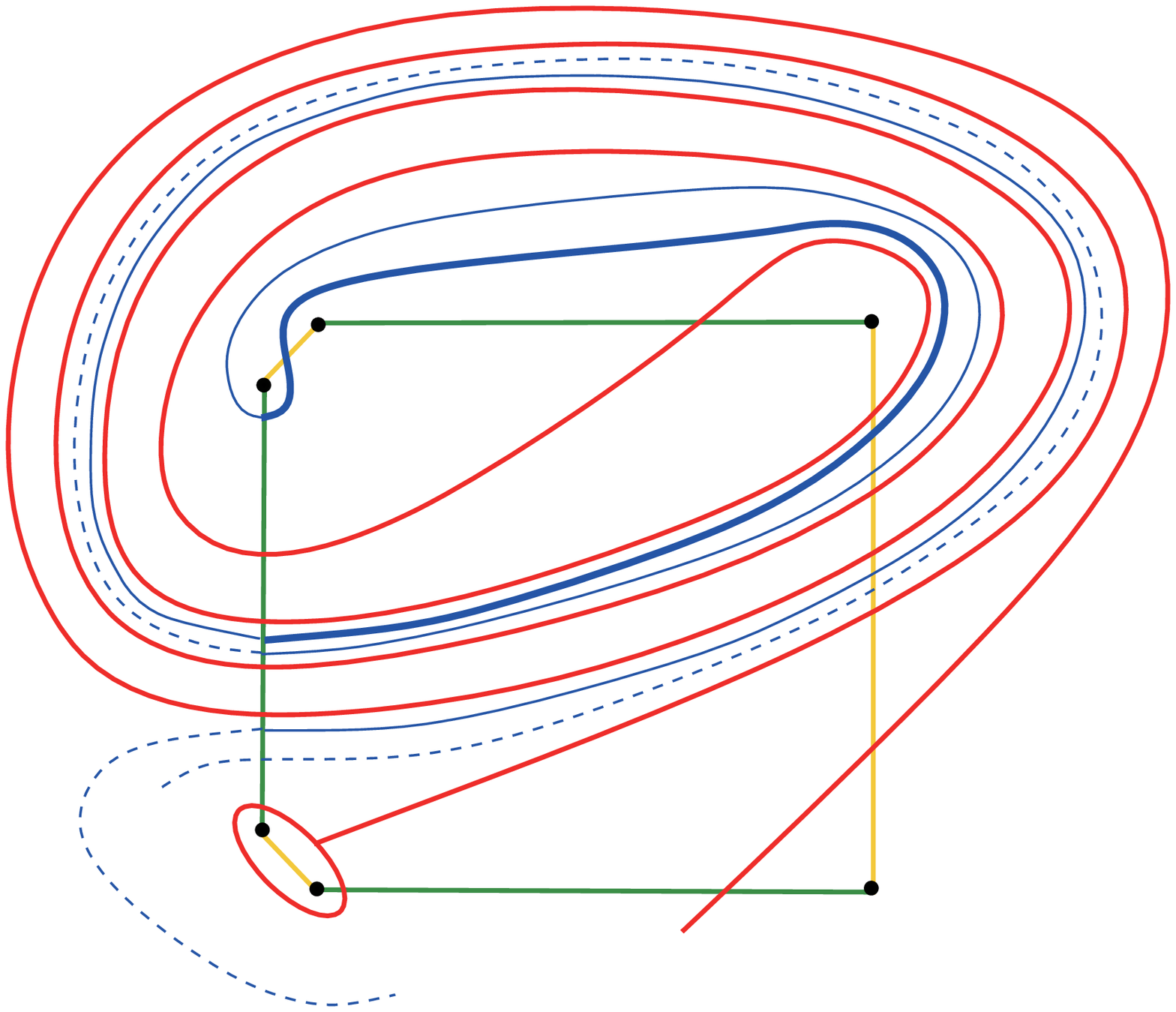}
\caption{A cancellation in the complementary region of $R_i$--- Subcase $2$}\label{fig14}
\end{center}
\end{figure}

\begin{figure}[ht!]
\begin{center}
\includegraphics[width=8.7cm]{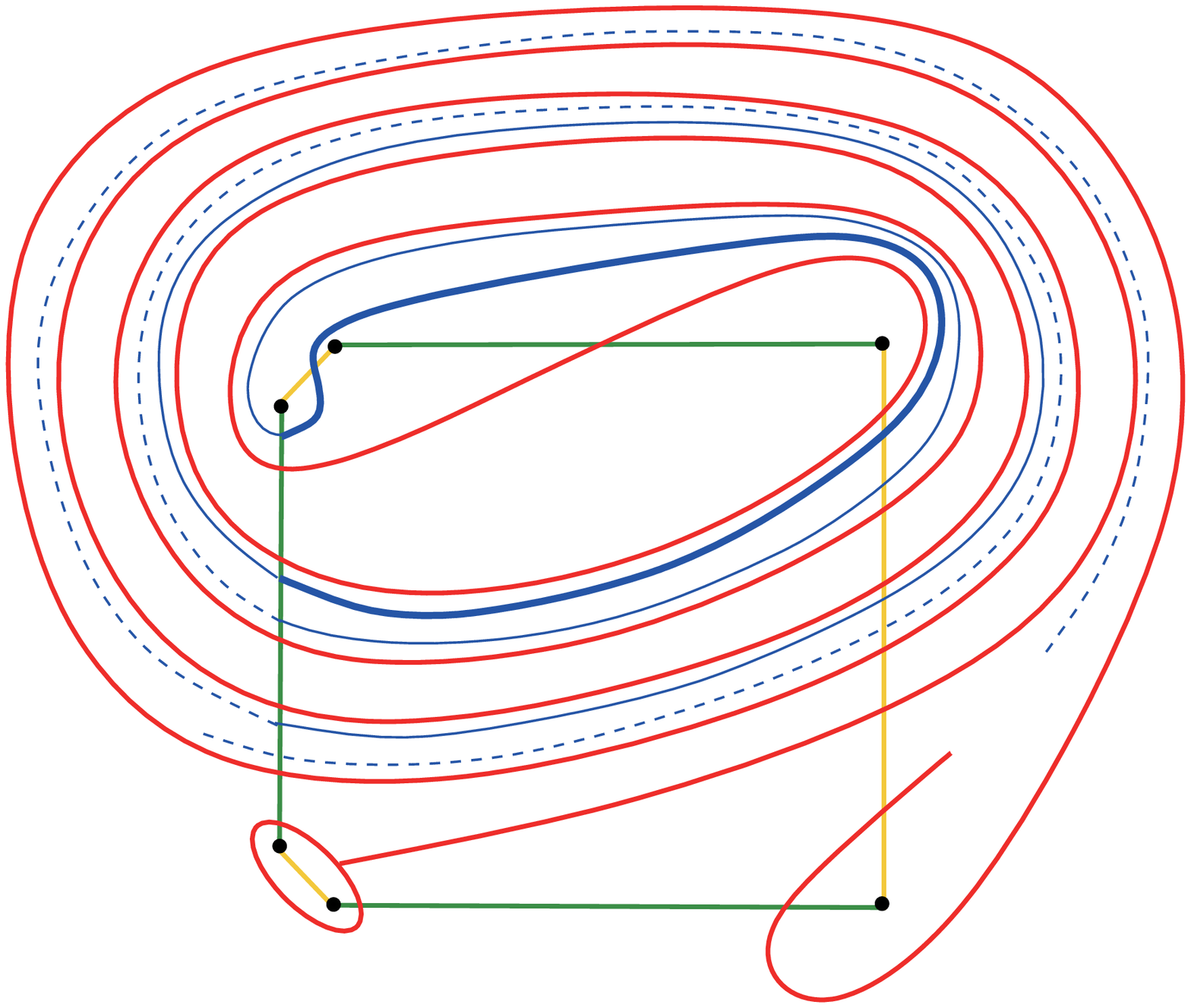}
\caption{A cancellation in the complementary region of $R_i$--- Subcase $3$}\label{fig15}
\end{center}
\end{figure}

\begin{figure}[ht!]
\begin{center}
\includegraphics[width=8.7cm]{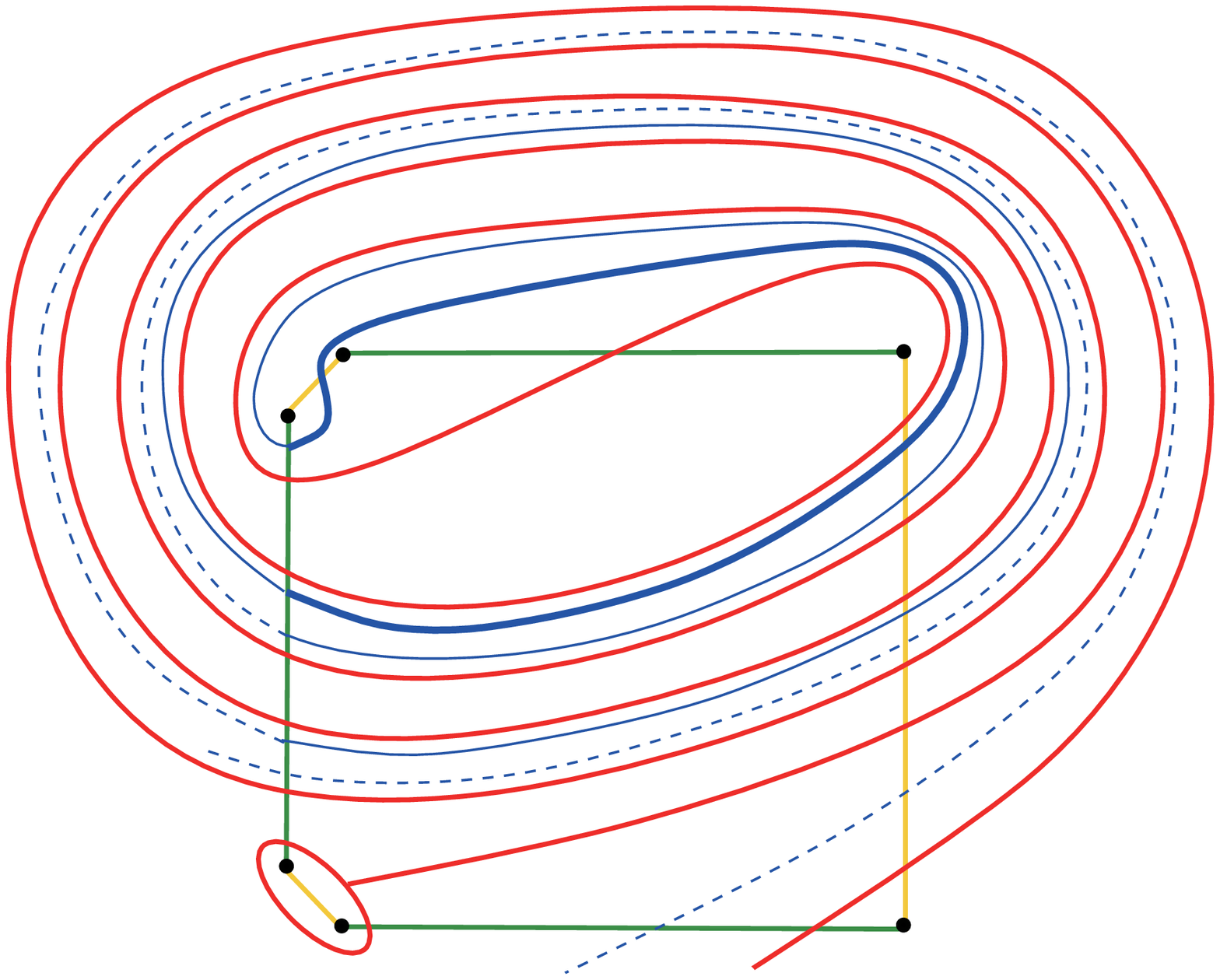}
\caption{A cancellation in the complementary region of $R_i$--- Subcase $4$}\label{fig16}
\end{center}
\end{figure}

\vspace{0.2cm}

Finally we show that there exists a weak reducing disk $D'$ in $V - K$ disjoint from $D$ and $C$.
Choose a weak reducing disk $D'$ disjoint from $D$ using Lemma \ref{lem2}.
If $C$ is disjoint from $D'$, then we are done.
Suppose that $C \cap D' \ne \emptyset$.
Consider an outermost arc $\gamma$ of $C \cap D'$ in $C$ such that
the outermost disk $C'$ in $C$ cut off by $\gamma$ is disjoint from $D$.
One of the disks obtained by surgery of $D'$ along $C'$, denoted by $D''$,
is isotopic to neither $D$ nor $D'$.
(The other is isotopic to $D$.)
By the argument of the preceding special case, $D''$ is a weak reducing disk.
Note that $|C \cap D''| < |C \cap D'|$.
By repeating the argument with $D''$ instead of $D'$,
we get the desired weak reducing disk disjoint from $D$ and $C$.

%\noindent {\bf Acknowledgements.}
%The second author is supported by the Basic Science Research Program
%through the National Research Foundation of Korea (NRF),
%funded by the Ministry of Education (2015R1D1A1A01056953).


\begin{thebibliography}{0}
\bibitem{Cho} {\bf S. Cho}
{\it Homeomorphisms of the $3$-sphere that preserve a Heegaard splitting of genus two},
Proc. Amer. Math. Soc. {\bf 136} (2008) 1113--1123.

\bibitem{Lee} {\bf J. H. Lee}
{\it Reduction of bridge position along bridge disks},
Topology Appl. {\bf 223} (2017) 50--59.

\bibitem{Otal} {\bf J. -P. Otal}
{\it Pr\'{e}sentations en ponts du n{\oe}ud trivial},
C. R. Acad. Sci. Paris S\'{e}r. I Math. {\bf 294} (1982) 553--556.

\bibitem{Rolfsen} {\bf D. Rolfsen}
{\it Knots and links},
Mathematics Lecture Series, 7. Publish or Perish, Inc., Berkeley, CA, 1976.

\bibitem{Scharlemann} {\bf M. Scharlemann}
{\it Automorphisms of the $3$-sphere that preserve a genus two Heegaard splitting},
Bol. Soc. Mat. Mexicana (3) {\bf 10} (2004) Special Issue, 503--514.
\end{thebibliography}
\end{document}